\documentclass[12pt,reqno]{amsart}
\let\oldlabel=\label
\def\prellabel{\marginparsep=1em\marginparwidth=44pt
    \def\label##1{\oldlabel{##1}\ifmmode\else\ifinner\else
         \marginpar{{\footnotesize\ \\ \tt
                    ##1}}\fi\fi}}
%
%
\def\NZQ{\mathbb}               
\def\NN{{\NZQ N }}

\def\ZZ{{\NZQ Z}}

\def\opn#1#2{\def#1{\operatorname{#2}}} 
%
\opn\chara{char}
\opn\rank{rank}
\opn\hilb{Hilb}
\opn\gr{gr}
\opn\Rees{{\mathcal R}}
%
%

%
%

\let\dirsum=\oplus

\usepackage[latin1]{inputenc}
\usepackage{mathptmx}
\newtheorem{theorem}{Theorem}[section]
\newtheorem{lemma}[theorem]{Lemma}
\newtheorem{corollary}[theorem]{Corollary}
\newtheorem{proposition}[theorem]{Proposition}

\theoremstyle{definition}
\newtheorem{remark}[theorem]{Remark}

\newtheorem{example}[theorem]{Example}
\newtheorem{remark/example}[theorem]{Remark/Example}

\newtheorem*{theorem*}{Theorem}

%
%
%
%
\let\epsilon=\varepsilon
\let\phi=\varphi
\let\kappa=\varkappa
\def\Tau{{\textup T}}

\opn\ini{in}
\opn\KRS{KRS}
\opn\krs{krs}
\opn\Krs{Krs}
\opn\DEL{DEL}
\opn\diag{diag}
\opn\Ker{Ker}
\opn\Image{Im}
\opn\DD{{\mathcal D}}
\opn\SS{{\mathcal S}}
\opn\MM{{\mathcal M}}
\opn\GL{GL}
\def\cF{{\mathcal F}}
\def\cA{{\mathcal A}}

\def\sep{\,|\,}
\hyphenation{bi-ta-bleaux}
\hyphenation{bi-ta-bleau}

\def\addots{\mathinner{\mkern1mu\raise1pt\hbox{.}\mkern2mu\raise4pt\hbox{.}
        \mkern2mu\raise7pt\vbox{\kern7pt\hbox{.}}\mkern1mu}}

\unitlength=0.7cm
\def\Box#1#2#3{\multiput(#1,#2)(1,0){2}{\line(0,1)1}
                           \multiput(#1,#2)(0,1){2}{\line(1,0)1}
                           \put(#1,#2){\makebox(1,1){$#3$}}}

\def\LBox#1#2#3#4{\multiput(#1,#2)(#4,0){2}{\line(0,1)1}
                           \multiput(#1,#2)(0,1){2}{\line(1,0){#4}}
                           \put(#1,#2){\makebox(#4,1){$#3$}}}

\def\Ci#1#2{\put(#1.5,#2.5){\circle{0.7}}}

%
%
\textwidth=15cm
\textheight=22cm
\topmargin=0.5cm
\oddsidemargin=0.5cm
\evensidemargin=0.5cm

\numberwithin{equation}{section}

\author{Andrew Berget}
\address{Department of Mathematics, University of Washington, Seattle, USA}
\email{aberget@math.washington.edu}

\author{Winfried Bruns}
\address{Universit\"at Osnabr\"uck, Institut f\"ur Mathematik, 49069 Osnabr\"uck, Germany}
\email{wbruns@uos.de}

\author{Aldo Conca}
\address{ Dipartimento di Matematica,
Universit\`a degli Studi di Genova, Italy}
\email{conca@dima.unige.it}

\title{Ideals generated by superstandard tableaux}

\keywords{linear resolution, determinantal ideal, Knuth--Robinson--Schensted correspondence,
standard bitableau, toric deformation, Rees algebra, Grothendieck group of equivariant modules}

\subjclass[2010]{13D15, 13F50, 14M12}
\date{}

\begin{document}

\begin{abstract}
We investigate products $J$ of ideals of  ``row initial'' minors
in the polynomial ring $K[X]$ defined by a generic $m\times
n$-matrix. Such ideals are shown to be generated by a certain set of standard 
bitableaux that  we call superstandard.
These bitableaux form a Gr\"obner basis of $J$, and $J$ has a linear minimal
free resolution. These results are used to derive a new
generating set for the Grothendieck group of finitely generated
$T_m \times \GL_n(K)$-equivariant modules over $K[X]$. We
employ the Knuth--Robinson--Schensted correspondence and a
toric deformation of the multi-Rees algebra that parameterizes
the ideals $J$.
\end{abstract}

\maketitle
\section{Introduction}\label{Intro}
Let $K$ be a field and $X$ an $m\times n$ matrix of indeterminates
$x_{ij}$ over $K$. We write $R=K[X]$ for the polynomial ring in the
$x_{ij}$. The group $\GL_m(K) \times \GL_n(K)$ acts on $R$ with an
action induced by the rule $(g,g')\cdot X = g X g'^{-1}$. The
representation theory of $R$ as a module for this group is intimately
connected to the linear basis of $R$ given by \emph{bitableaux}
\cite[Ch.~11]{BV}. The bitableaux are products of minors which are
indexed by pairs of tableaux of the same shape with strictly
increasing rows and weakly increasing columns. We say that a bitableau
is \emph{superstandard} if its left factor tableaux has column $i$
filled with the number $i$. The left tableau determines the row
indices of the minors whose product the bitableau represents.

For each $i$, $1 \leq i \leq m$, let $J_i \subset R$ denote the ideal
generated by the size $i$ minors of the first $i$ rows of $X$. In the
current work we study an arbitrary product of such ideals. For a
decreasing sequence of positive integers $\min(m,n)\geq s_1 \geq \dots \geq s_\nu $
we set $J_S = J_{s_1} \dots J_{s_\nu}$. It is a consequence of
Theorem~\ref{superbasis} that the ideals $J_S$ are exactly those that
are generated by superstandard bitableau of shape $S$.

Our main results are Theorems~\ref{mainkrs} and \ref{mainsagbi}, which
we summarize here as follows.
\begin{theorem*}\leavevmode
\begin{enumerate}
\item The collection of superstandard bitableaux of shape
    $S$ in $R$ forms a Gr\"obner basis for the ideal $J_S$
    with respect to a diagonal monomial order.

\item  The ideal $J_S$ has a linear minimal free
    resolution.
\end{enumerate}
\end{theorem*}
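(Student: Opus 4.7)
My plan is to prove the two parts in sequence: (1) via the Knuth--Robinson--Schensted correspondence applied to a diagonal term order, and (2) by transferring linearity of resolution from the resulting squarefree monomial initial ideal back to $J_S$.

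For part (1), I would fix a diagonal monomial order $\tau$ on $R=K[X]$, so that the initial term of a bitableau $(T\sep U)$ is the ``diagonal monomial'' $\prod x_{T_{ij},U_{ij}}$. By the already--stated Theorem~\ref{superbasis}, $J_S$ is generated by the superstandard bitableaux of shape $S$, so one only has to verify that every monomial of $\ini_\tau(J_S)$ is divisible by one of their initial monomials. Here KRS enters: a monomial $\mu$ corresponds to a pair $(P,Q)$ of tableaux of a common shape $\lambda$, and the standard--monomial theorem for determinantal ideals translates the condition $\mu\in\ini_\tau(J_S)$ into a dominance condition on $\lambda$ relative to $S$. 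Because the left factor of a superstandard bitableau of shape $S$ is forced (column $i$ is filled with $i$), this dominance is exactly the assertion that $\mu$ is divisible by the initial monomial of some superstandard bitableau of shape $S$. This yields both the Gr\"obner basis claim and an explicit presentation of $\ini_\tau(J_S)$ as a squarefree monomial ideal.

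For part (2), since $J_S$ is generated in the single degree $|S|=s_1+\dots+s_\nu$, the resolution of $J_S$ is linear iff $\beta_{i,j}(R/J_S)=0$ for $j\ne i+|S|$. Because graded Betti numbers only grow under Gr\"obner degeneration, it suffices to prove that the squarefree monomial ideal $\ini_\tau(J_S)$ has a linear resolution. My plan is to exhibit linear quotients with respect to a lexicographic order on the right tableaux indexing the superstandard bitableau generators: for each generator $\mu$, the colon of the ideal of its predecessors into $\mu$ should be generated by variables. The technical input is a row/column exchange on standard tableaux which shows that whenever an initial monomial is shared with an earlier generator, one can cancel a single variable to land inside the predecessor ideal; this is the usual mechanism by which one verifies linear quotients for tableau--indexed monomial ideals.

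The main obstacle is precisely this linear--quotients verification: showing that no ``extra'' nonlinear syzygies appear among the diagonal monomials of superstandard bitableaux of shape $S$. I expect the toric deformation of the multi--Rees algebra advertised in the abstract to be the natural vehicle here. It packages all the ideals $J_S$ into a single algebra whose Sagbi degeneration is an affine toric variety; the normality and Cohen--Macaulay/Koszul properties of the toric limit transfer uniformly to every fiber, and from Koszulness of an associated fiber ring one can read off the linearity of the resolution of each $J_S$. Should the direct linear--quotients argument prove too delicate, this toric route will be the backstop.
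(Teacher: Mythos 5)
There is a genuine gap in your argument for part (1), at exactly the point where the real work happens. Membership of a standard bitableau $\Sigma$ in $J_S$ is \emph{not} a dominance condition on the common shape $\lambda$ of the KRS pair: by Theorem~\ref{superbasis} the standard basis of $J_S$ consists of those $\Sigma$ whose \emph{left} tableau contains a row-initial (superstandard) subtableau of shape $S$, i.e.\ $a_{ij}=j$ for $j\le s_i$ --- a condition on the entries, not on $\lambda$. (A $t$-minor of rows $2,\dots,t+1$ has the right shape but does not lie in $J_t$.) Moreover, even after identifying the correct standard basis, the statement you need is that for every such $\Sigma$ the monomial $\KRS(\Sigma)$ is divisible by $\diag(\Tau)$ for some superstandard $\Tau$ of shape $S$; since $\KRS(\Sigma)\ne\ini_\prec(\Sigma)$ in general, this does not follow from any formal property of KRS. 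It is precisely the content of Lemma~\ref{crucial}, whose proof requires tracking, through the deletion/push-out steps of KRS, when the boxes of the superstandard subtableau change content (the ``row mark'' bookkeeping), and then verifying that the extracted subarrays are diagonals of row-initial minors. Your proposal asserts this divisibility as if it were a reformulation of a shape condition, so the central combinatorial lemma is missing. A small additional slip: $\ini_\prec(J_S)$ is not squarefree once $S$ has more than one part (e.g.\ $\ini_\prec([1\,2\sep 1\,2]^2)=x_{11}^2x_{22}^2$).

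For part (2) your primary route (linear quotients on the generators of $\ini_\prec(J_S)$) is left entirely unverified, and you acknowledge it as the main obstacle; your ``backstop'' is in fact the paper's actual argument, but it too needs ingredients you do not supply. One must show that $X\cup\cF$ is a Sagbi basis of $\Rees(J_1,\dots,J_m)$ (equivalently, that the Hibi relations lift --- this is where part (1) re-enters), that the toric initial algebra is presented by a \emph{quadratic Gr\"obner basis} (the Hibi relations plus the linear-in-$p$ relations of Proposition~\ref{seemseasy}), that Koszulness transfers back under Sagbi deformation, and finally Blum's theorem (Theorem~\ref{Kos}) to convert bigraded Koszulness of the multi-Rees algebra into linearity of the resolution of each product $J_1^{a_1}\cdots J_m^{a_m}$. ``Reading off linearity from the Koszulness of a fiber ring'' is not quite the right mechanism --- the special fiber alone does not suffice; it is the Koszul property of the whole Rees algebra that is used. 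So the strategy is the right one, but as written both halves rest on the steps that constitute the actual proofs.
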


The theorem is supplemented by results on primary
decompositions and integral closedness. Statement (1) will be
demonstrated in two ways. The first is via the
Knuth--Robinson--Schensted correspondence, and this approach,
together with a brief introduction to standard bitableaux, the
straightening law, and the KRS correspondence, will occupy
Sections~\ref{SectStr} and \ref{SectKRS}. The second proof of
(1) and the proof of (2) are via Sagbi (or toric) deformations.
It will take place in Section~\ref{sagbi}. The crucial point
for (2) is that the multi-Rees algebra of the ideals
$J_1,\dots,J_m$ is a Koszul algebra, and, in its turn, this
will be derived from the Koszul property of the initial algebra
of the multi-Rees algebra.

The theorem should be viewed as occurring in the greater
context of ideals generated by a family of bitableaux possessing
natural Gr\"obner bases \cite{BC1,BC2,Co4,Stu,StS,Sul}. Nevertheless,
the fact that the standard bitableaux in a product ideal like
$J_S$ form a Gr\"obner basis, is a rare phenomenon associated
with ideals generated by ``maximal'' minors. Statement (1) of
the theorem is a direct generalization of Conca's
result \cite{Co4} for rectangular shapes $S$.

In Section~\ref{SectK} we use statement (1) of the theorem to derive a
new generating set for the Grothendieck group of finitely generated
$T_m \times \GL_n(K)$-equivariant $R$-modules, where $T^m \subset
\GL_m(K)$ is the torus of diagonal matrices.  Having a basis for this
group coming from structure sheaves of schemes was the original
motivation for studying the class of ideals $J_S$.

\section{The straightening law}\label{SectStr}
Let $K$ be a field and $X=(x_{ij})$ an $m\times n$ matrix of
indeterminates $x_{ij}$ over $K$. We will study determinantal
ideals in the polynomial ring $R=K[X]=K[x_{ij}:i=1,\dots,m,\
j=1,\dots,n]$ generated by all the indeterminates $x_{ij}$.

Almost all of the approaches one can choose for the
investigation of determinantal ideals use standard bitableaux
and the straightening law. The principle governing this
approach is to consider all the minors of $X$ (and not just the
$1$-minors $x_{ij}$) as generators of the $K$-algebra $R$ so
that products of minors appear as ``monomials''. The price to
be paid, of course, is that one has to choose a proper subset
of all these ``monomials'' as a linearly independent $K$-basis:
the standard bitableaux to be defined below are a natural
choice for such a basis, and the straightening law tells us how
to express an arbitrary product of minors as a $K$-linear
combination of the basis elements. (In \cite{BV} standard
bitableaux were called \emph{standard monomials}; however, we
will have to consider the ordinary monomials in $K[X]$ so often
that we reserve the term ``monomial'' for products of the
$x_{ij}$.)

In the following
$$
[a_1,\dots,a_t\sep b_1,\dots,b_t]
$$
stands for the determinant of the submatrix $(x_{a_ib_j}:
i=1,\dots,t,\ j=1,\dots,t)$.

The letter $\Delta$ always denotes a product
$\delta_1\cdots\delta_w$ of minors, and we assume that the
sizes $|\delta_i|$ (i.~e.\ the number of rows of the submatrix
$X'$ of $X$ such that $\delta_i=\det(X')$) are
\emph{descending}, $|\delta_1|\ge \dots\ge |\delta_w|$. By
convention, the empty minor $[\sep]$ denotes $1$. The
\emph{shape} $|\Delta|$ of $\Delta$ is the sequence
$(|\delta_1|,\dots,|\delta_w|)$. If necessary we may add
factors $[\sep]$ at the right hand side of the products, and
extend the shape accordingly.

A product of minors is also called a \emph{bitableau}. The
choice of this term ``bitableau'' is motivated by the graphical
description of a product $\Delta$ as a pair of Young tableaux
as in Figure \ref{Young}:
\begin{figure}[hbt]
\begin{gather*} \unitlength=0.8cm
\begin{picture}(6,4)(0,0)
\Box03{a_{1t_1}} \LBox13{\cdots}4 \Box53{a_{11}} \Box12{a_{2t_2}}
\LBox22{\cdots}3 \Box52{a_{21}} \put(2,1){\makebox(4,1){$\cdots$}}
\Box20{a_{wt_w}} \LBox30{\cdots}2 \Box50{a_{w1}}
\end{picture} \hspace{0.5cm}
\begin{picture}(6,4)(0,0)
\Box03{b_{11}} \LBox13{\cdots}4 \Box53{b_{1t_1}} \Box02{b_{21}}
\LBox12{\cdots}3 \Box42{b_{2t_2}} \put(0,1){\makebox(4,1){$\cdots$}}
\Box00{b_{w1}} \LBox10{\cdots}2 \Box30{b_{wt_w}}
\end{picture}
\end{gather*}
\caption{A bitableau}\label{Young}
\end{figure}
Every product of minors is represented by a bitableau and,
conversely, every bitableau stands for a product of minors if
the length of the rows is decreasing from top to bottom, the
entries in each row are strictly increasing from the middle to
the outmost box, the entries of the left tableau are in
$\{1,\dots,m\}$ and those of the right tableau are in
$\{1,\dots,n\}$. These conditions are always assumed to hold.

For formal correctness one should consider the bitableaux as
purely combinatorial objects and distinguish them from the
ring-theoretic objects represented by them, but since there is
no real danger of confusion, we simply identify them.

Whether $\Delta$ is a standard bitableau is controlled by a
partial order of the minors, namely
\begin{multline*}
[a_1,\dots,a_t\sep b_1,\dots,b_t] \leq [c_1,\dots,c_u\sep d_1,\dots,d_u]\\
\iff\quad t\ge u\quad\text{and}\quad a_i\le c_i,\ b_i\le d_i,\
i=1,\dots,u.
\end{multline*}
A product $\Delta=\delta_1\cdots\delta_w$ is called a
\emph{standard bitableau} if
$$
\delta_1\leq\dots\leq\delta_w,
$$
in other words, if in each column of the bitableau the indices
are non-decreasing from top to bottom. The letter $\Sigma$ is
reserved for standard bitableaux.

The fundamental straightening law of Doubilet--Rota--Stein says
that every element of $R$ has a unique presentation as a
$K$-linear combination of standard bita\-bleaux (for example,
see Bruns and Vetter \cite{BV})

\begin{theorem}\label{straight}
\begin{itemize}
\item[(a)] The standard bitableaux are a $K$-vector space
    basis of $K[X]$.
\item[(b)] If the product $\delta_1\delta_2$ of minors is
    not a standard bitableau, then it has a representation
$$
\delta_1\delta_2=\sum x_i\epsilon_i\eta_i,\qquad x_i\in K,\ x_i\neq0,
$$
where $\epsilon_i\eta_i$ is a standard bitableau for all
$i$ and $\epsilon_i<\delta_1,\delta_2<\eta_i$ (here we must
allow that $\eta_i=1$).
\item[(c)] The standard representation of an arbitrary
    bitableau $\Delta$, i.e., its representation as a
    linear combination of standard bitableaux $\Sigma$, can
    be found by successive application of the straightening
    relations in \emph{(b)}.
\end{itemize}
\end{theorem}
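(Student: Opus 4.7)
The plan is to establish the three parts in the order (b), (c), (a), since (b) is the engine and the other parts are corollaries. For part (b), given a non-standard product $\delta_1\delta_2=[a_1,\dots,a_t\sep b_1,\dots,b_t][c_1,\dots,c_u\sep d_1,\dots,d_u]$ with $t\ge u$, the failure of $\delta_1\le\delta_2$ localises at some index $k\le u$ where $a_k>c_k$ or $b_k>d_k$. I would then invoke the classical Sylvester/Pl\"ucker-style exchange identity: in the polynomial ring $K[X]$, the sum
\[
\sum_{\sigma}(-1)^{\sigma}\,[c_1,\dots,c_{k-1},a_{\sigma(1)},\dots,a_{\sigma(k)}\sep b_1,\dots,b_t]\cdot[a_{\sigma(k+1)},\dots,a_{\sigma(t)},c_{k+1},\dots,c_u\sep d_1,\dots,d_u]
\]
(taken over $(k,t-k)$-shuffles $\sigma$ of $\{a_1,\dots,a_t\}$, with the identity term corresponding to the original product) vanishes; this is a consequence of the vanishing of a determinant with a repeated set of rows in a suitable $(t{+}k)\times(t{+}k)$ matrix. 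Solving for $\delta_1\delta_2$ produces a relation of the required form once each resulting product is itself straightened by induction. The key point, and the place where care is needed, is to verify the bounds $\epsilon_i<\delta_1,\delta_2<\eta_i$: the left factors $\epsilon_i$ acquire only indices that are smaller in the partial order because they absorb smaller entries, and the right factors $\eta_i$ only become larger. An analogous exchange on column indices handles the case where the violation is on the $b$'s.

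For part (c), I would argue by Noetherian induction on a well-ordering of bitableaux. Fix the lexicographic order on the shape, refined by lexicographic comparison of the row contents read top-to-bottom and left-to-right. Every straightening step from (b) replaces $\delta_1\delta_2$ by combinations $\epsilon_i\eta_i$ whose descending rearrangement is strictly smaller in this order, so iteratively applying (b) to any two adjacent non-comparable factors of a product $\Delta$ terminates in a $K$-linear combination of standard bitableaux. This also shows that the straightening algorithm is effective, giving the second half of (c).

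Part (a) then splits into spanning and linear independence. Spanning is immediate from (c): each monomial in the $x_{ij}$ is a product of $1$-minors, so (c) rewrites it as a $K$-linear combination of standard bitableaux. For linear independence I would use a diagonal monomial order, under which the initial monomial of $[a_1,\dots,a_t\sep b_1,\dots,b_t]$ is $x_{a_1b_1}\cdots x_{a_tb_t}$, and therefore the initial monomial of a bitableau $\Sigma=\sigma_1\cdots\sigma_w$ is the product of these diagonal contributions. The main combinatorial lemma required is that distinct standard bitableaux have distinct initial monomials: given the multiset of pairs $(a,b)$ appearing in the leading monomial, the standardness conditions force a unique reconstruction of the rows (peel off the pairs with the largest $a$, sort them, and identify them as the bottom row of the bitableau, then recurse). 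Since leading monomials of a linear dependence must cancel, and standard bitableaux carry distinct leading monomials, no nontrivial $K$-linear relation can exist.

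The principal obstacle is the Pl\"ucker-type identity underlying (b), together with the bookkeeping to verify that the straightening strictly decreases the chosen term order. Everything else is either a direct induction on that order (for (c)) or a reconstruction argument on leading monomials (for the independence half of (a)); the counting of standard bitableaux via a bijection with monomials supplies a clean alternative to the reconstruction argument if one prefers a dimension-count proof.
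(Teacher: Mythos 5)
The paper itself does not prove Theorem~\ref{straight}; it quotes the Doubilet--Rota--Stein straightening law and refers to Bruns--Vetter \cite{BV}, so your attempt must be judged on its own. It contains two genuine gaps.

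First, your exchange identity for (b) is not the right relation for a generic matrix. As written it is dimensionally inconsistent (the first factor receives $k-1+k$ row indices but $t$ column indices), but the deeper problem is that a shuffle which preserves the sizes $t$ and $u$ of the two factors can never produce the size-changing terms that the theorem explicitly permits and that actually occur: for example
$$
[1\sep 2]\,[2\sep 1]=[1\sep 1]\,[2\sep 2]-[1,2\sep 1,2],
$$
where a $2$-minor $\epsilon_i=[1,2\sep 1,2]$ with $\eta_i=1$ appears on the right. Fixed-size shuffles of row (or column) indices are the Pl\"ucker relations for \emph{maximal} minors; to straighten products of minors of arbitrary sizes one must either carry out the full Doubilet--Rota--Stein combinatorics or first reduce to maximal minors of an enlarged matrix (the route taken in \cite{BV}), and your plan contains neither ingredient.

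Second, the linear-independence half of (a) rests on the claim that distinct standard bitableaux have distinct initial monomials under a diagonal order, i.e.\ that $\diag$ is injective on standard bitableaux. This is false, and the paper says so explicitly in Section~\ref{SectKRS}: for instance $[1\sep 1]\,[2\sep 2]$ and $[1,2\sep 1,2]$ are both standard and both have leading monomial $x_{11}x_{22}$, so your ``peel off the largest pairs'' reconstruction cannot work. The parenthetical alternative you mention --- a degree-preserving bijection between standard bitableaux and monomials combined with a dimension count in each multigraded component --- is the correct repair, but that bijection is exactly the (nontrivial) KRS correspondence of Theorem~\ref{KRSmain}, so it cannot be waved in as an afterthought. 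Your termination argument for (c) is fine in outline, but it is conditional on a correct version of (b).
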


Let $e_1,\dots,e_m$ and $f_1,\dots,f_n$ denote the canonical
$\ZZ$-bases of $\ZZ^m$ and $\ZZ^n$ respectively.  Clearly
$K[X]$ is a $\ZZ^m\dirsum\ZZ^n$-graded algebra if we give
$x_{ij}$ the ``vector bidegree'' $e_i\dirsum f_j$. All minors
are homogeneous with respect to this grading. In a bitableau of
bidegree $(c_1,\dots,c_m,d_1,\dots,d_n)\in\ZZ^m\dirsum\ZZ^n$,
row $i$ appears with multiplicity $c_i$, and column $j$ appears
with multiplicity $d_j$, $i=1,\dots,m$, $j=1,\dots,n$. The
straightening relations must therefore preserve these
multiplicities, whose collection is often called the
\emph{content} of the bitableau.

We say that an ideal $I\subset R$ has a \emph{standard basis}
if $I$ is the $K$-vector space spanned by the standard
bitableaux $\Sigma\in I$.

Let $S=s_1,\dots,s_v$ be  weakly decreasing sequence of
positive integers $s_i\le \min(m,n)$. In this article we
investigate the ideal
$$
J_S=J_{s_1}\cdots J_{s_v}
$$
where $J_t$ is the ideal generated by the $t$-minors of the
first $t$ rows of $X$. In other words, $J_t$ is the the ideal
of maximal minors of the matrix $X_t$ formed by the first $t$
rows of $X$ in $K[X_t]$ and extended to $K[X]$. We will see
that the ideals $J_S$ behave very much like the powers of
ideals of maximal minors that they generalize in a natural way.

The bitableaux $\Delta=\delta_1\cdots\delta_v$ with
$\delta_i\in J_{s_i}$, $|\delta_i|=s_i$, are automatically
standard on the left side (the tableau of row indices). We call
them \emph{row superstandard} and just \emph{superstandard} if
they are also standard on the right side. Note that in a (row)
superstandard bitableau all indices $a_{ij}$ are as small as
possible, namely $a_{ij}=j$. In \cite{BV} superstandard
tableaux are called \emph{row initial}, but we want to reserve
the term ``initial'' for use in connection with monomial
orders.

Let $\Delta=\delta_1\cdots\delta_u$ and
$\Delta'=\delta_1'\cdots\delta_w'$ be bitableaux. We say that
$\Delta'$ is a \emph{subtableau} of $\Delta$ if $w\le u$,
$|\delta_i'|\le|\delta_i|$ for $i=1,\dots,w$ and, with
$s=|\delta_i|$, $t=|\delta'_i|$, and $\delta=[a_{i1}\dots
a_{is}\sep b_{i1}\dots b_{is}]$ one has
$$
\delta_i'=[a_{i1}\dots a_{it}\sep b_{i1}\dots b_{it}]
$$
for $i=1,\dots,w$. Subtableaux of (super)standard bitableaux
are evidently (super)standard.

\begin{theorem}\label{superbasis}
The ideal $J_S$ has a standard basis that is given by all
standard bitableaux containing a superstandard tableau of shape
$S$.
\end{theorem}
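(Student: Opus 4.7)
The plan is to prove both inclusions defining the claimed standard basis of $J_S$. The easier direction shows that any standard bitableau $\Sigma = \sigma_1 \cdots \sigma_u$ containing a superstandard subtableau of shape $S$ lies in $J_S$: for each $j = 1, \ldots, v$ the row indices of $\sigma_j$ include $\{1, 2, \ldots, s_j\}$, so Laplace expansion of $\sigma_j$ along those rows writes it as a sum of products, each carrying a factor $[1, \ldots, s_j \sep J]$ lying in $J_{s_j}$. Consequently $\sigma_1 \cdots \sigma_v \in J_{s_1} \cdots J_{s_v} = J_S$, and hence $\Sigma \in J_S$.

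The substantive direction is to show that every element of $J_S$ is a $K$-linear combination of such standard bitableaux. Since $J_S$ is generated as an ideal by products $\delta_1 \cdots \delta_v$ with $\delta_i$ a row superstandard minor of size exactly $s_i$, and any polynomial factor expands in the standard basis of $R$ by Theorem \ref{straight}(a), it is enough to analyze the standard representation of $\Delta = \delta_1 \cdots \delta_v \cdot \Sigma'$ obtained by iterating the straightening relation in Theorem \ref{straight}(b), and to verify that only standard bitableaux containing a superstandard subtableau of shape $S$ can occur.

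The technical heart is a content-preservation argument. If $\gamma$ has its first $s$ row indices equal to $(1, 2, \ldots, s)$ and $\gamma \gamma' = \sum x_k \epsilon_k \eta_k$ is a straightening relation, then $\epsilon_k < \gamma$ forces the first $s$ row indices of $\epsilon_k$ to be componentwise at most $(1, \ldots, s)$; combined with strict increase these must equal $(1, 2, \ldots, s)$, so $\epsilon_k$ inherits the row superstandard property of size $s$. Moreover, when $\gamma$ and $\gamma'$ are both row superstandard of exact sizes $s \ge s'$, equating row multiplicities across the straightening relation forces $|\epsilon_k| = s$, $|\eta_k| = s'$, and $\eta_k$ to be row superstandard of size $s'$. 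Hence straightening acting within the ``row superstandard block'' of the bitableau preserves both its shape and the row superstandard structure of each factor.

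The main obstacle will be to control interactions between this block and the remaining minors from $\Sigma'$, since such a straightening can enlarge $|\epsilon_k|$ and force re-sorting. I would organize this by induction on $v$, using $J_S = J_{s_1} \cdot J_{(s_2, \ldots, s_v)}$. The base case $v = 1$ reduces to a description of the standard basis of $J_{s_1}$, which follows from applying the first part of the content argument to a single minor. The inductive step combines the hypothesis (controlling positions $1, \ldots, v-1$ inside each standard bitableau arising from the expansion of $\Sigma'$) with the content argument (which controls the insertion of the additional factor $\delta$ of size $s_1$ and any further straightening) to ensure that the leading $v$ minors of every standard bitableau produced form a superstandard subtableau of shape $S$.
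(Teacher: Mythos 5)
Your overall strategy coincides with the paper's: reduce to vector-space generators, track the straightening process, and use the combination of the partial order (the constraint $\epsilon_k\le\gamma$) together with content preservation to see that row superstandardness survives. The Laplace-expansion argument for the easy inclusion is correct and is indeed needed (the paper leaves it implicit), and the content lemma in your third paragraph is correct as stated.

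The gap sits exactly at the point you flag as ``the main obstacle.'' Your content lemma controls $\eta_k$ only when both factors are row superstandard of \emph{exact} sizes $s\ge s'$; but after a block factor has been straightened against a factor coming from $\Sigma'$, its size can exceed its nominal $s_i$, and then the stated lemma no longer applies to subsequent steps. The induction on $v$ does not repair this: inside the inductive step, as soon as one straightening replacement has been performed the product is no longer of the form (minor)$\times$(standard bitableau), so the inductive hypothesis about $\Sigma'$ cannot be re-invoked, and you are back to tracking an arbitrary product of minors through arbitrary straightening steps --- the same obstacle, one level down. The paper's resolution is not an induction but a more generous invariant: one works with \emph{unordered} products $\delta_1\cdots\delta_w$, $w\ge v$, in which the factor in position $i$ merely has its first $s_i$ row indices equal to $1,\dots,s_i$, with no upper bound on $|\delta_i|$ and with the convention $s_j=0$ for $j>v$. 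A single straightening of any incomparable pair preserves this: the first output factor $\epsilon$ satisfies $\epsilon\le\delta_i$, so its first $s_i$ row indices are componentwise at most $1,\dots,s_i$ and hence equal to them; and since the rows $1,\dots,s_j$ occur with multiplicity two in $\delta_i\delta_j$ while a minor repeats no row index, the second factor $\eta$ must contain all of them and therefore begins with $1,\dots,s_j$. Both halves of your content argument apply verbatim in this generality, covering block--block and block--nonblock interactions alike, and the induction becomes unnecessary. With this adjustment of the invariant your proof closes.
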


\begin{proof}
As a vector space over $K$, $J_S$ is certainly generated by all
products
$$
\delta_1\cdots\delta_w,\qquad w\ge v,
$$
such that $\delta_i=[1 \dots s_i \dots \sep \dots]$ for
$i=1,\dots,v$. (We do not assume that the $\delta_i$ are
ordered by size.) It is enough to show that this property is
preserved by all products of minors that arise if we replace an
incomparable subproduct $\delta_i\delta_j$ by the right hand
side of the straightening relation.

Let $\delta_i=[1 \dots s_i \dots \sep \dots]$ and $\delta_j=[1
\dots s_i \dots \sep \dots]$ where we have set $s_j=0$ if
$j>v$. It is immediately clear that the first factor $\epsilon$
of each summand on the right hand side of the straightening
relation must be of type $[1 \dots s_i \dots\sep\dots]$ since
$\epsilon\le \delta_i$, and since no index is lost on the right
hand side, the second factor satisfies $\eta=[1\dots
s_j\dots\sep\dots]$.

After finitely many steps we arrive at a $K$-linear combination
of standard bitableaux, each of which contains a superstandard
tableau of shape $S$.
\end{proof}

The description of the standard basis yields the primary
composition of the ideals $J_S$ as an easy consequence:

\begin{corollary}\label{primary}
Write $\{s_1,\dots,s_v\}=\{t_1,\dots,t_u\}$ with
$t_1>\dots>t_u$ and set $e_i=\max\{j: s_j=t_i\}$. Then
$$
J_S=\bigcap_{i=1}^u J_{t_i}^{e_i}
$$
is an irredundant primary decomposition, and $J_S$ is an
integrally closed ideal.
\end{corollary}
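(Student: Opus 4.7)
The plan is to deduce the ideal-theoretic equality $J_S = \bigcap_{i=1}^u J_{t_i}^{e_i}$ from Theorem~\ref{superbasis} by comparing standard bases, and then verify primariness of the components, distinctness of the primes, irredundancy, and integral closedness using classical properties of ideals of maximal minors of a generic matrix.

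For the equality, Theorem~\ref{superbasis} supplies standard bases for both sides: the left-hand side has as standard basis the set of standard bitableaux containing a superstandard tableau of shape $S$, while the right-hand side has as standard basis those $\Sigma$ which contain a superstandard tableau of shape $(t_i^{e_i})$ for every $i$. It thus suffices to check that these two conditions on a standard bitableau $\Sigma$ are equivalent. At each position $j$, let $i$ be the unique index with $e_{i-1} < j \leq e_i$, so that $s_j = t_i$. The strongest constraint that the shapes $(t_{i'}^{e_{i'}})$ collectively impose on the $j$-th factor of $\Sigma$ is precisely that it start with $1, 2, \dots, t_i$: contributions from $i' > i$ are weaker (since $t_{i'} < t_i$), while those from $i' < i$ do not reach position $j$ at all (since $j > e_{i'}$). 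This is exactly the constraint imposed by $S$ at position $j$, proving the equivalence.

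For the primary decomposition, each $J_{t_i}$ is prime, being the extension to $K[X]$ of the classical maximal-minor prime of the generic $t_i \times n$ matrix $X_{t_i}$ (\cite{BV}); distinct values of $t_i$ yield distinct primes, of distinct heights $n-t_i+1$. The powers $J_{t_i}^{e_i}$ coincide with the corresponding symbolic powers and are therefore $J_{t_i}$-primary, again a classical property of maximal-minor ideals. For irredundancy, I would use the standard-basis description once more: for each $k$, the standard bitableau obtained by concatenating superstandard factors of shape $t_i$ in every block $i \neq k$, with the $k$-th block replaced by factors of size $t_{k+1}$ (or omitted entirely when $k = u$), lies in $\bigcap_{j \neq k} J_{t_j}^{e_j}$ but fails the row-index condition required for membership in $J_{t_k}^{e_k}$.

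Integral closedness follows from the normality of the ideal of maximal minors of a generic matrix: each $J_{t_i}^{e_i}$ is then integrally closed in $K[X]$ (since integral closedness is preserved under polynomial extension), and an arbitrary intersection of integrally closed ideals is integrally closed. The principal obstacle in this plan is the combinatorial equivalence in the second paragraph; the remaining assertions rest on well-documented properties of ideals of maximal minors.
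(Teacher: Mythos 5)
Your proposal is correct and takes essentially the same route as the paper: both deduce the equality and the irredundancy by comparing the standard bases supplied by Theorem~\ref{superbasis} (your block-by-block argument just spells out the paper's one-line equivalence, and your explicit witness instantiates its ``comparing standard bases once more''), and both obtain primariness and integral closedness of the components $J_{t_i}^{e_i}$ from the classical facts that powers of ideals of maximal minors of a generic matrix are primary and that the corresponding Rees algebra is normal, with polynomial extension preserving these properties.
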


\begin{proof}
The ideals on both sides have a standard basis as follows from
the theorem. Therefore it is enough to compare these. But a
standard bitableau contains a superstandard bitableau of shape
$S$ if and only if it contains a rectangular superstandard
bitableau with $e_i$ rows of length $t_i$ for every $i$, and
the latter form the standard basis of $J_{t_i}^{e_i}$ by the
theorem.

Comparing standard bases once more, we see that none of the
$J_{t_i}^{e_i}$ is contained in the intersection of the others.

Finally, it remains to observe that the ideals $J_{t_i}^{e_i}$
are primary. But $J_{t_i}^{e_i}$ arises from
$I_{t_i}(X_{t_i})^{e_i}$ by tensoring over $K$ with the
polynomial ring in the variables $x_{kl}$ outside $X_{t_i}$,
and such extensions preserve the property of being primary.
That the powers of $I_{t_i}(X_{t_i})$ are primary is
well-known; see \cite[9.18]{BV}.

For the last statement it is enough to note that the powers
$J_{t_i}^{e_i}$ are not only primary, but also integrally
closed. This follows from the normality of the Rees algebra
$\Rees(J_{t_i})$ \cite[9.17]{BV}.
\end{proof}

The statement on integral closedness is equivalent to the
normality of a multi-Rees algebra. We postpone this aspect
until Theorem \ref{mainsagbi}.

\section{The Knuth--Robinson--Schensted correspondence}\label{SectKRS}

Let $\Sigma$ be a standard bitableau. The
Knuth--Robinson--Schensted correspondence KRS (see Fulton
\cite{F} or Stanley \cite{Sta}) sets up a bijective
correspondence between standard bitableaux and monomials in the
ring $K[X]$. The treatment of KRS below follows \cite{BC1} and
\cite{BC2}. However, for better compatibility with the
definition of the ideals $J_S$ we have exchanged the roles of
the left and right tableau.

If one starts from bitableaux, the correspondence is
constructed from the algorithm \emph{KRS-step} \cite[4.2]{BC2}
(based on \emph{deletion} \cite[4.1]{BC2}). Let $\Sigma=(a_{ij}
| b_{ij} )$ be a non-empty standard bitableau. The output of
KRS-step is a triple $(\Sigma',\ell,r)$ consisting of a
standard bitableau $\Sigma'$ and a pair of integers $(\ell,r)$
constructed as follows.
\begin{itemize}
\item[(a)] One chooses the largest entry $r$ in the
    \emph{right} tableau of $\Sigma$; suppose that $\{
    (i_1,j_1),\allowbreak \dots,\allowbreak (i_u,j_u)\}$,
    $i_1<\dots<i_u$, is the set of indices $(i,j)$ such
    that $r=b_{ij}$. (Note that $j_1\ge\dots\ge j_u$.)
\item[(b)] Then the boxes at the \emph{pivot position}
    $(p,q)=(i_u,j_u)$ in the right and the left tableau are
    removed.
\item[(c)] The entry $r=b_{pq}$ of the removed box in the
    right tableau is the third component of the output, and
    $a_{pq}$ is stored in $s$, an auxiliary memory cell.
 \item[(d)] The first and the second component of the
     output are determined by a ``push out'' procedure on
     the \emph{left} tableau as follows:
\begin{itemize}
\item[(i)]  if $p=1$, then $\ell=s$ is the second
    component of the output, and the first is the
    standard bitableau $\Sigma'$ that has now been
    created;
\item[(ii)] otherwise $s$ is moved one row up and
    pushes out the left most entry $a_{p-1k}$ such that
    $a_{p-1k}\le s$ whereas $a_{p-1k}$ is stored in
    $s$.
\item[(iii)] one replaces $p$ by $p-1$ and goes to step
    (i).
\end{itemize}
\end{itemize}
It is now possible to define KRS recursively: One sets
$\KRS([\sep])=1$, and $\KRS(\Sigma)=\KRS(\Sigma')x_{\ell r}$
for $\Sigma\neq[\sep]$.

There is an inverse to deletion, called insertion that can be
easily constructed by inverting all steps in deletion. Together
they prove the main theorem on KRS:

\begin{theorem}\label{KRSmain}
The map $\KRS$ is a bijection between the set of standard
bitableaux on $\{1,\dots,m\}\times\{1,\dots,n\}$ and the
monomials of $K[X]$.
\end{theorem}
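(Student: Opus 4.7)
The plan is to prove bijectivity by constructing an explicit two-sided inverse to $\KRS$, namely an \emph{insertion} procedure that reverses KRS-step, and then arguing by induction on the number of boxes of $\Sigma$ (equivalently on the total degree of the corresponding monomial). Since the paper states that insertion can be constructed by inverting the steps of deletion, the real content lies in verifying (i) that KRS-step outputs a genuine standard bitableau, (ii) that insertion does the same in the other direction, and (iii) that the two operations are mutually inverse.

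First I would verify that KRS-step is well-defined. The pivot box $(p,q)=(i_u,j_u)$ is an outer corner of the right tableau by its very choice (largest entry $r$, and among its occurrences the lowest row, hence the rightmost box in that row carrying the value $r$), so removing it preserves the Young shape together with the row-strict and column-weak conditions. For the left tableau, the push-out loop replaces in each row the leftmost entry $a_{p-1,k}\le s$ by $s$ and propagates the displaced value one row up; a short check shows that this preserves strict increase along rows and weak increase along columns.

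Second, I would construct the inverse of KRS-step. Given a triple $(\Sigma',\ell,r)$, perform reverse bumping on the left tableau of $\Sigma'$: insert $\ell$ into row $1$ by bumping out the leftmost entry strictly greater than $\ell$ (or appending $\ell$ at the end of the row if none exists), insert the bumped value into row $2$ analogously, and continue until the procedure terminates by placing a value in a previously empty box at some position $(p,q)$; finally add a box containing $r$ at position $(p,q)$ of the right tableau. The key verification is that the result $\Sigma$ is a standard bitableau and that KRS-step applied to $\Sigma$ returns exactly $(\Sigma',\ell,r)$. For the latter, one must check that the new box $(p,q)$ is the pivot chosen by step~(a) of KRS-step, which amounts to $r$ being weakly greater than every previously existing entry in the right tableau and, among entries equal to $r$, appearing in the lowest row.

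To promote the single-step inversion to a bijection with monomials, I would sort the indices of the factors of $x_{\ell_1 r_1}\cdots x_{\ell_N r_N}$ by $r$ weakly increasing and, within each block of equal $r$, by $\ell$ weakly decreasing; this is precisely the order in which iterated KRS-step peels them off in reverse. Inserting them in this order into the empty bitableau then yields a standard bitableau, and iterated KRS-step recovers the same sequence. The main obstacle I anticipate is the careful bookkeeping of ties, i.e.\ the case when some $x_{\ell r}$ occurs with multiplicity greater than one, so that the selection rule in step~(a) of KRS-step agrees with the chosen insertion order on repeated indices; once this is handled, induction on $N$ delivers both $\KRS\circ(\text{insertion})=\mathrm{id}$ and $(\text{insertion})\circ\KRS=\mathrm{id}$.
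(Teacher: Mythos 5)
Your strategy coincides with the paper's: the proof of Theorem~\ref{KRSmain} given there consists precisely of the remark that deletion (KRS-step) can be inverted box by box to yield an insertion procedure, and you are supplying the details of that outline, including the correct use of the ordering $(*)$ (factors sorted with $r$ weakly increasing and, within a block of constant $r$, $\ell$ weakly decreasing) and the verification that the newly created box is the pivot that step~(a) would select. So there is no divergence of method, only of level of detail.

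There is, however, a concrete error in your insertion rule, and it sits exactly at the point you yourself single out as the main obstacle, namely the treatment of ties. In this paper's convention the rows of a tableau are \emph{strictly} increasing while the columns are only \emph{weakly} increasing, and accordingly KRS-step(d)(ii) bumps the extremal entry $a_{p-1,k}$ with $a_{p-1,k}\le s$, i.e.\ equality is allowed. The inverse of that rule must insert $\ell$ into a row by displacing the smallest entry that is $\ge \ell$ (weak inequality), appending $\ell$ only when every entry of the row is $<\ell$. You instead bump the smallest entry \emph{strictly} greater than $\ell$, which is the rule appropriate for the transposed (row-weak, column-strict) convention. With your rule, inserting a value $\ell$ into a row already containing $\ell$ creates a repeated entry in that row, destroying row-strictness, and insertion fails to invert deletion whenever the push-out chain replaces an entry by an equal one. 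Such ties are not a marginal case here: for $\Sigma=[1\sep 1]\cdot[1\sep 1]$ one has $\KRS(\Sigma)=x_{11}^2$, and your insertion applied to $x_{11}\cdot x_{11}$ produces the non-standard row $1,1$ rather than $\Sigma$; more generally, for every superstandard bitableau the \emph{entire} push-out sequence consists of such ties, which is the content of equation \eqref{KRSdiag} and is essential for the applications in Section~\ref{SectKRS}. Replacing the strict inequality by a weak one repairs the construction, and the remainder of your argument then goes through.
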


For insertion one must order the factors of a monomial in a way
that respects the monotonicity properties of KRS-step: let
$x_{r_1\ell_1}\cdots x_{r_k\ell_k}=\KRS(\Sigma)$ with the
factors ordered as in the definition of KRS; then
\begin{equation*}
r_i\le r_{i+1}\qquad\text{and}\qquad r_i=r_{i+1} \implies\ell_i\ge \ell_{i+1}.
\eqno{(*)}
\end{equation*}
See \cite[p.~37]{BC2} (with $r$ and $\ell$ exchanged). Property
$(*)$ allows us to take care of a superstandard subtableau, but
some additional bookkeeping is necessary. To this end we extend
the output of KRS-step by a further component $\rho$, the
\emph{row mark} that we will now define. (Here ``row'' refers
to the tableau, not to a minor.)

Let $S=s_1,\dots,s_v$ a nonincreasing sequence as above, and
suppose that $\Sigma$ contains a superstandard bitableau of
shape $S$. Then we can distinguish boxes in the left tableau
that belong to the superstandard bitableau from those that do
not belong to it, namely the box at position $(i,j)$ belongs to
the superstandard subtableau  if and only if $a_{ij}=j$ and
$j\le s_i$. We supplement step (d) above by
\begin{itemize}
\item[(iv)] if $a_{ij}=j$ and $j\le s_i$, but $(i,j)$ is
    the pivot position or $a_{ij}'>a_{ij}$, then $\rho=i$
    is the fourth component of the output of KRS-step.
    Otherwise we set $\rho=0$.
\end{itemize}

Let us first make sure that rule (iv) makes sense by showing
that there can be at most one row $i$ with $a_{ij}=j$ and
$a_{ij}'> a_{ij}$. This is clear if $(i,j)$ is the pivot
position since all remaining positions remain unchanged. In the
other case, if $a_{ij}=j$ and $a_{ij}'>a_{ij}$, then
$i=\max\{k:a_{kj}\}=j$. In fact, if the box at position $(i,j)$
is hit by the push out sequence in KRS-step(d) and $a_{ij}=j$,
then the entry $j$ is pushed out into the next upper row and
replaces $a_{i-1j}=j$ by $j$.

The triples $(\ell,r,\rho)$ form the columns of a three row
array $\krs(\Sigma)$ that we build by listing the triples
$(\ell,r,\rho)$ from right to left as follows:
$$
\krs(\Sigma)=\krs(\Sigma')\begin{pmatrix}\ell\\ r\\ \rho\end{pmatrix}.
$$

We give an example in Figure \ref{Del} with $S=3,2$. The
circles in the right tableau mark the pivot position, those in
the left mark the chains of ``pushouts'':

\begin{figure}[hbt]
$$
\begin{gathered}
\begin{picture}(3,3)(0,0)
\Box221 \Box122 \Box023 \Box211 \Box112 \Box014 \Box202
\Ci20 \Ci11 \Ci12
\end{picture}
\hspace{0.5cm}
\begin{picture}(3,3)(0,0)
\Box021 \Box122 \Box223 \Box012 \Box113 \Box214 \Box004
\Ci00
\end{picture}
\\[0.5cm] 
\begin{picture}(3,2)(0,0)
\Box211 \Box112 \Box013 \Box201 \Box102 \Box004
\Ci 01 \Ci00
\end{picture}
\hspace{0.5cm}
\begin{picture}(3,2)(0,0)
\Box011 \Box112 \Box213 \Box002 \Box103 \Box204
\Ci20
\end{picture}
\\[0.5cm] 
\begin{picture}(3,2)(0,0)
\Box211 \Box112 \Box014 \Box201 \Box102
\Ci 11
\Ci 10
\end{picture}
\hspace{0.5cm}
\begin{picture}(3,2)(0,0)
\Box011 \Box112 \Box213 \Box002 \Box103
\Ci10
\end{picture}
\vspace*{0.5cm} 
\end{gathered}
\qquad\qquad
\begin{gathered}
\begin{picture}(3,2)(0,0)
\Box014 \Box112  \Box 211 \Box201 \Ci01
\end{picture}
\hspace{0.5cm}
\begin{picture}(3,2)(0,0)
\Box011 \Box112 \Box213 \Box002 \Ci21
\end{picture}
\\[0.5cm] 
\begin{picture}(2,2)(0,0)  
\Box111 \Box012 \Box101
\Ci10 \Ci11
\end{picture}
\hspace{0.5cm}
\begin{picture}(2,2)(0,0)
\Box011 \Box112 \Box002
\Ci00
\end{picture}
\\[0.5cm] 
\begin{picture}(2,1)(0,0) 
\Box002 \Box101
\Ci00
\end{picture}
\hspace{0.5cm}
\begin{picture}(2,1)(0,0)
\Box001 \Box102
\Ci10
\end{picture}
\\[0.5cm] 
\begin{picture}(2,2)(0,0)
\Box111
\Ci11
\end{picture}
\hspace{0.5cm}
\begin{picture}(2,2)(0,0)
\Box011
\Ci01
\end{picture}
\end{gathered}
$$
\vspace*{-1cm}
\caption{The KRS algorithm}\label{Del}
\end{figure}
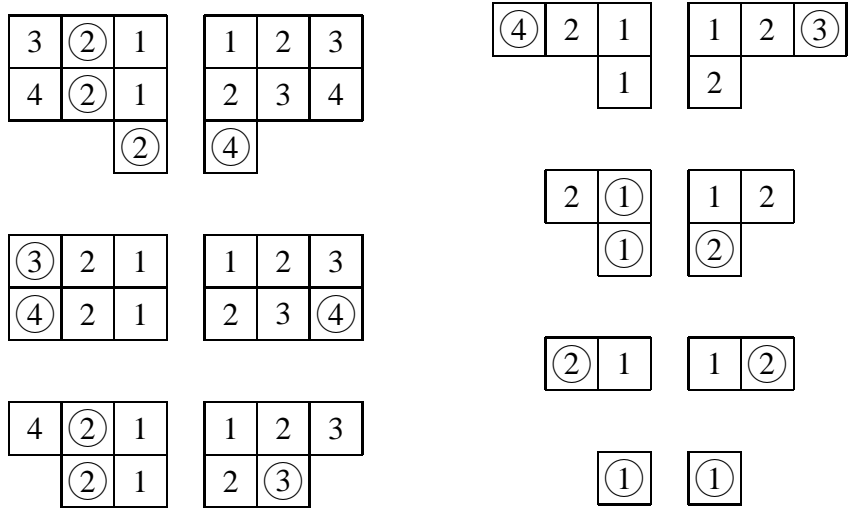

The three row array produced by  the example of Figure
\ref{Del} is
$$\krs(\Sigma)=\begin{pmatrix}
1&2&1&4&2&3&2\\
1&2&2&3&3&4&4\\
1&1&2&0&2&1&0
\end{pmatrix},
$$
and
$$
\KRS(\Sigma)=x_{11}x_{22}x_{12}x_{23}x_{44}x_{34}x_{24}.
$$

Let us extract the subarrays with row marks $1$ and $2$:
$$
\begin{pmatrix}
1&2&3\\
1&2&4\\
1&1&1
\end{pmatrix}
\qquad\text{and}\qquad
\begin{pmatrix}
1&2\\
2&3\\
2&2
\end{pmatrix}
$$
The product of the corresponding monomials
$$
x_{11}x_{22}x_{34}\qquad\text{and}\qquad x_{12}x_{23}
$$
is the KRS image of a superstandard bitableau of shape $(3,2)$
(though it is not the KRS image of the superstandard subtableau
contained in $\Sigma)$. What we have observed in this special
case is always true, as we will be stated in Lemma
\ref{crucial} below.

Let
$$
\diag [a_1\dots a_t\sep b_1\dots b_t]=\prod_{i=1}^t x_{a_i b_i}
$$
be the product of the indeterminates in the main diagonal of
$[a_1\dots a_t\sep b_1\dots b_t]$. If
$\Delta=\delta_1\cdots\delta_w$ is an arbitrary bitableau, then
we set
$$
\diag(\Delta)=\prod_{i=1}^w \diag(\delta_i).
$$

It is easy to see that the map $\diag$ is not injective on
standard bitableaux (let alone all bitableaux), in contrast to
KRS. (Otherwise KRS would be completely superfluous in the
study of determinantal ideals.) However, if $\Sigma$ is a
superstandard bitableau, then
\begin{equation}
\diag(\Sigma)=\KRS(\Sigma)\label{KRSdiag}
\end{equation}
since the whole push out sequence in KRS-step(d) always
replaces the entry of a box by itself. 

\begin{lemma}\label{crucial}
Let $\Sigma$ be a standard bitableau containing a superstandard
bitableau of shape $S$. Then there exists a superstandard
bitableau $\Tau$ of shape $S$ such that $\diag(\Tau)$ divides
$\KRS(\Sigma)$.
\end{lemma}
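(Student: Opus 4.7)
The plan is to read $\Tau$ directly off the three-row array $\krs(\Sigma)$ by grouping its columns according to the row marks $\rho$. For each $i\in\{1,\dots,v\}$, let $A_i$ denote the subarray of $\krs(\Sigma)$ formed by the columns with third entry $\rho=i$; row $i$ of $\Tau$ will be assembled from $A_i$.

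The first step is a bookkeeping lemma: every superstandard box $(i,j)$ with $j\le s_i$ is destroyed exactly once during the iterated application of KRS-step, where ``destroyed'' means that the box is either chosen as the pivot or has its entry strictly increased by a pushout. Since rule (iv) sets $\rho=i$ at precisely such events, $A_i$ has exactly $s_i$ columns. Moreover, a simultaneous induction tracking the moments at which superstandard boxes are destroyed should show that, at the instant $(i,j)$ is destroyed, the superstandard entries in columns $1,\dots,j$ of all rows above $i$ are still in place, and consequently the value $s=j$ entering the pushout register is, at every higher row $k<i$, forced by the rule in \S(d)(ii) to overwrite the untouched superstandard entry at $(k,j)$ by itself (see the remark immediately following rule (iv)). Hence the value emitted from row $1$ equals $\ell=j$, and the $\ell$-values in $A_i$ are exactly $\{1,2,\dots,s_i\}$, each occurring once.

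The same induction should further show that the boxes in row $i$ are destroyed in chronological order $j=s_i,s_i-1,\dots,1$. Since $\krs(\Sigma)$ is assembled by appending new columns on the right, reading $A_i$ left to right reverses chronology and thus lists the $\ell$-values as $1,2,\dots,s_i$ in that order. Writing $c_{ij}$ for the $r$-value of the column of $A_i$ carrying $\ell=j$, set
$$
\tau_i=[\,1,2,\dots,s_i\sep c_{i1},c_{i2},\dots,c_{i,s_i}\,],\qquad \Tau=\tau_1\tau_2\cdots\tau_v.
$$
By construction $\diag(\Tau)=\prod_{i,j}x_{j,c_{ij}}$ divides $\KRS(\Sigma)$, because each of its factors corresponds to a distinct column of $\krs(\Sigma)$.

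The main obstacle I expect is the final verification that $\Tau$ is a bona fide superstandard bitableau: within each row, $c_{i1}<c_{i2}<\cdots<c_{i,s_i}$ so that $\tau_i$ is a legitimate minor, and between rows, $c_{ij}\le c_{i+1,j}$ so that the right tableau of $\Tau$ is standard. Both monotonicities should follow from property $(*)$ applied to the $r$- and $\ell$-values in $\krs(\Sigma)$, combined with the chronological destruction order established above; I expect this to demand a delicate induction on the number of KRS-steps that carefully monitors how pivot choices and pushout chains propagate the superstandard structure from one step to the next.
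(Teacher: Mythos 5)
Your construction is the one the paper uses: group the columns of $\krs(\Sigma)$ by row mark, check that the subarray $A_i$ has $\ell$-row equal to $1,\dots,s_i$ read left to right, and take the $r$-row as the column indices of the $i$-th minor of $\Tau$. The bookkeeping claims (each superstandard box destroyed exactly once, the pushed-out value $j$ propagating unchanged up column $j$ through the still-intact superstandard entries, destruction in the order $j=s_i,\dots,1$) all match the paper's argument, and the within-row strict inequality $c_{i1}<\dots<c_{i,s_i}$ is not really an ``obstacle'': the $r$-row of the whole array is weakly increasing by the first inequality in $(*)$, and an equality $c_{ij}=c_{i,j+1}$ would force the corresponding $\ell$-values to be weakly decreasing by the second inequality in $(*)$, contradicting that they are $j<j+1$. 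You should write these two lines out rather than defer them.

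The genuine gap is the inter-row condition $c_{ij}\le c_{i+1,j}$, which you correctly identify as the hard point and then do not prove. Property $(*)$ only compares columns of $\krs(\Sigma)$ through their left-to-right order in the full array, and the relative position of the $\ell=j$ column of $A_i$ versus the $\ell=j$ column of $A_{i+1}$ is not controlled by anything you have established; a ``delicate induction'' is not a proof, and it is not clear such an induction closes. The paper avoids this entirely with an opening reduction that your write-up is missing: it suffices to produce a bitableau $\Tau$ whose \emph{row} tableau is superstandard of shape $S$, without requiring standardness on the column side, because any such $\Tau$ satisfies $\diag(\Tau)=\diag(\Tau')$ for some genuinely standard (hence superstandard) $\Tau'$ of the same shape --- one rearranges the column entries without changing the multiset $\{(j,c_{ij})\}$ and hence without changing the diagonal monomial. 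With that reduction in place, your construction together with the two-line verification of the within-row condition completes the proof; without it, your argument does not yet establish the lemma as stated.
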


\begin{proof}
Suppose $\Tau$ is a (not necessarily standard) bitableau whose
row tableau is superstandard of shape $S$. Then
$\diag(\Tau)=\diag(\Tau')$ where $\Tau'$ is standard of shape
$S$. This is easy to see and left to the reader. Therefore it
is enough to prove the lemma without the requirement that
$\Tau$ is standard. (Equation \eqref{KRSdiag} would allow us to
replace $\diag(\Tau)$ by $\KRS(\Tau)$, but this is irrelevant.)

Let $\Sigma=(a_{ij}\sep b_{ij})$ and choose an index $k$ such
that row $k$ of $\Sigma$ occurs in the superstandard
subtableau. Let $s=\max\{j: a_{kj}=j\}$. As in the example we
extract the subarray $A$ from $\krs(\Sigma)$ with row mark $k$.
We claim that the corresponding monomial is the diagonal of an
$s$-minor $[1\dots s\sep c_1 \dots c_s]$. This claim amounts to
the following conditions for the subarray~$A$:
\begin{enumerate}
\item The entries of the first row are $1,\dots,s$ in
    ascending order;
\item the entries $c_1,\dots,c_s$ of the second row are
    strictly increasing.
\end{enumerate}

First of all we note that the row mark is $k$ if a box at
position $(k,z)$ with $a_{kz}=z$ changes its content in
KRS-step: either $(k,z)$ is the pivot position or in
$\Sigma'=(a_{ij}'\sep b_{ij}')$ one has $a_{kz}'>z$. This
change happens exactly once for $j=1,\dots,s$. Therefore the
entries of the first row of $A$ are indeed $1,\dots,s$.

But $1,\dots,s$ are also produced in the right order. If
$a_{kz}=z$, then $a_{kw}=w$ for $w=1,\dots,z-1$, and so these
boxes have not yet changed content. Moreover the component $r$
of the output of KS-step is exactly $z$, and $1,\dots,z-1$ will
be produced later. This proves (1).

The entries $c_1,\dots,c_s$ in the second row are automatically
weakly increasing by the first inequality in $(*)$, and an
equality of two entries would contradict (1) because of the
second inequality in $(*)$. In other words, (1) implies (2).
\end{proof}

It is now time to introduce a \emph{diagonal monomial} (or
\emph{term}) \emph{order} $\prec$ on the polynomial ring
$K[X]$. This is a term order on the polynomial ring under which
the initial monomial of each minor is the product of the
elements in the main diagonal:
$$
\ini_\prec [a_1\dots a_t\sep b_1\dots b_t]=\diag [a_1\dots a_t\sep b_1\dots b_t].
$$
Diagonal monomial orders are the standard choice in the study
of determinantal ideals from the Gr\"obner basis viewpoint. See
\cite{BC2} for a survey that also contains a brief introduction
to general Gr\"obner bases and initial ideals.

\begin{theorem}\label{mainkrs}
Let $S=s_1\dots s_u$ be a nonincreasing sequence. Then the
following hold:
\begin{enumerate}
\item the row superstandard bitableaux of shape $S$ form a
    Gr\"obner basis of $J_S$.
\item In particular, $\ini_\prec(J_S)=\KRS(J_S)$.
\item Furthermore, $\ini_\prec(J_S)=\prod
    \ini_\prec(J_{s_i})$, and
\item $\ini_\prec(J_S)=\bigcap_{i=1}^u
    \ini_\prec(J_{t_i}^{e_i})=\bigcap_{i=1}^u
    \ini_\prec(J_{t_i})^{e_i}$ where the sequences
    $\{t_1,\dots,t_u\}$ and $e_1,\dots,e_u$ are defined as
    in Corollary \ref{primary}.
\end{enumerate}
\end{theorem}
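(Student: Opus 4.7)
The plan is to treat (1) and (2) together via a Hilbert-function sandwich that pins $\ini_\prec(J_S)$ between a monomial ideal written down explicitly and itself. Let $B$ be the monomial ideal in $K[X]$ generated by $\{\diag(\Delta):\Delta\text{ row superstandard of shape }S\}$, so that the generators of $B$ are precisely the initial monomials of the proposed Gr\"obner basis in (1). Since each such $\Delta$ lies in $J_S$ by construction, the inclusion $B\subseteq\ini_\prec(J_S)$ is immediate, and the whole content of (1) and (2) is the reverse inclusion.

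For this I would compare $K$-dimensions degree by degree. On one side, standard Gr\"obner basis theory gives $\dim_K\ini_\prec(J_S)_d=\dim_K(J_S)_d$, and Theorem~\ref{superbasis} identifies the latter with the number $N_d$ of standard bitableaux of degree $d$ containing a superstandard subtableau of shape $S$. On the other side, $\KRS$ is a degree-preserving injection from the set of standard bitableaux into the monomials of $K[X]$, and Lemma~\ref{crucial} guarantees that it sends each of these $N_d$ bitableaux into a monomial of $B$; indeed the superstandard $\Tau$ produced by the lemma is \emph{a fortiori} row superstandard, so $\diag(\Tau)$ is one of the generators of $B$. This gives $N_d\le\dim_K B_d$, and combining with $B\subseteq\ini_\prec(J_S)$ we conclude
\[
\dim_K B_d\;\le\;\dim_K\ini_\prec(J_S)_d\;=\;N_d\;\le\;\dim_K B_d,
\]
so all three quantities coincide. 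Thus $B=\ini_\prec(J_S)$, which is (1), and the $\KRS$-image of the standard basis of $J_S$ fills out the monomials of $B=\ini_\prec(J_S)$, which is (2).

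Statement (3) is then immediate from inspection of generators: a row superstandard of shape $S$ is a product of row superstandard single rows of sizes $s_1,\dots,s_\nu$, so its diagonal factors as a product of generators of the ideals $\ini_\prec(J_{s_i})$, and these are exactly the generators of the product ideal on the right. For (4), the combinatorial identity doing the work is that a standard bitableau contains a superstandard subtableau of shape $S$ if and only if it contains a superstandard subtableau of each rectangular shape $(t_i,\dots,t_i)$ of height $e_i$: the ``only if'' is restriction to the first $e_i$ rows and first $t_i$ columns, while the ``if'' reassembles the rectangles row by row, using that for $e_{i-1}<j\le e_i$ the $i$-th rectangular hypothesis forces the first $s_j=t_i$ left entries of row $j$ of $\Sigma$ to be $1,\dots,s_j$. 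Applying $\KRS$ (which, being bijective, commutes with set intersection) and using the description of $\ini_\prec(J_{t_i}^{e_i})$ from (1)--(2) in the rectangular case, one obtains the first equality in (4); the second equality is (3) applied to the constant sequence $(t_i,\dots,t_i)$. The only substantive step in the whole argument is Lemma~\ref{crucial}: once its divisibility assertion is in hand, everything else is a sandwich of easy bounds and a routine shape matching.
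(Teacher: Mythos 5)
Your argument is correct and follows the same route as the paper: Lemma~\ref{crucial} is the engine, and your Hilbert-function sandwich (plus the bijectivity of $\KRS$ for the intersection statement in (4)) is exactly the content of the cited result \cite[Lemma 5.2]{BC2} on in-KRS ideals, which you have simply unpacked and proved inline. The combinatorial rectangle-matching you use for (4) is likewise the observation already made in the proof of Corollary~\ref{primary}, so nothing is missing and nothing is genuinely new in method.
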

\begin{proof}
Claims (1) and (2) result immediately from Lemma \ref{crucial}
and \cite[Lemma 5.2]{BC2}.

Since $\prod \ini_\prec(J_{s_i})\subset \ini_\prec(J_S)$ for
obvious reasons, it is enough to observe the converse for (3).
But this follows again from Lemma \ref{crucial} since
$\ini_\prec(\Tau)$ is contained in $\prod \ini_\prec(J_{s_i})$.

In the terminology of \cite{BC1} or \cite{BC2}, claim (2),
applied to the sequence $t_i,\dots, t_i$ ($e_i$ repetitions)
says that the ideal $J_{t_i}^{e_i}$ are in-KRS, and for in-KRS
ideals the formation of initial ideals commutes with
intersection; see \cite[Lemma 5.2]{BC2}. So it remains to use
Corollary \ref{primary}.
\end{proof}

\section{Sagbi deformation}\label{sagbi}

Sagbi bases are the {\bf S}ubalgebra {\bf A}nalog of  {\bf
G}r\"obner bases for {\bf I}deals.  They have been introduced
by Robbiano and Sweedler \cite{RS}.  In \cite{CHV} Conca,
Herzog and Valla shown how to use Sagbi bases and Sagbi
deformation (also called toric deformation) in the study of
homological properties of subalgebras of polynomials rings and,
in particular, to Rees algebras.

In this section we will use Sagbi deformations of Rees algebras
to study the ideals $J_S$ defined in the previous sections. By
definition, these ideals are products of powers of the ideas
$J_1,\dots,J_m$ (we do not assume that $n\ge m$; if $m>n$ then  all results
in this section hold with $J_{n+1}=0,\dots,J_m=0$.)

Before we turn to our class of ideals we study the Sagbi
approach via Rees algebras in general. Let $A=K[x_1,\dots,x_r]$
the polynomial ring in $r$ indeterminates, endowed with a
monomial order $\prec$. For every $K$-vector subspace $V$ of
$A$ we may consider the vector space $\ini_\prec(V)$ generated
by the monomials $\ini_\prec(f)$ as $f\neq 0$ varies in $V$. If
$V$ is an ideal of $A$, then $\ini_\prec(V)$ will be an ideal
of $A$, and if $V$ is a $K$-subalgebra of $A$, then $\ini_\prec(V)$
will be a $K$-subalgebra of $A$ as well. If $V$ is an ideal, then a
subset $G$ of $V$ is a Gr\"obner basis if $\ini_\prec(V)$ is
generated (as an ideal) by $\{ \ini_\prec(f) : f \in G\}$.
Similarly, if $V$ is an algebra, then a subset $G$ of $V$ is a
\emph{Sagbi basis} if $\ini_\prec(V)$ is generated (as a
$K$-algebra) by $\{ \ini_\prec(f) : f \in G\}$. A variation of
the Buchberger criterion allows us to detect whether a given
set $G$ of polynomials is a Sagbi basis. One has to replace the
so called S-pairs with  the binomial relations defining the
toric ring $K[ \ini_\prec(f) : f \in G]$. We refer the reader
to \cite{CHV} for further details.

Let now $I_1,\dots,I_v$ homogeneous ideals of $A$. We want to
express the condition
\begin{equation}
\label{1}
\ini_\prec(I_1^{a_1}\cdots I_v^{a_v} )= \ini_\prec(I_1)^{a_1} \cdots
\ini_\prec(I_v)^{a_v}   \mbox{ for all } (a_1,\dots, a_v)\in \NN^v
\end{equation}
in terms of Sagbi deformations. Let
$$
\Rees(I_1,\dots,I_v)=\bigoplus_{a\in \NN^v}  I_1^{a_1}\cdots I_v^{a_v}
$$
be the (multi-)Rees ring $\Rees(I_1,\dots,I_v)$ associated to
the family $I_1,\dots,I_v$. In order to describe it as a as a
subalgebra of a polynomial ring, we take new variables
$y_1,\dots, y_v$. Then we can identify $\Rees(I_1,\dots,I_v)$
with the subalgebra
$$
A[I_1y_1,\dots, I_vy_v]\subset A[y]=A[y_1,\dots,y_v].
$$
By construction, $\Rees(I_1,\dots,I_v)$ has a $\ZZ\oplus
\ZZ^{v}$-graded structure induced by the assignment
$\deg(x_i)=e_0$ for all $i$ and $\deg(y_j)=e_j$ for all $j$
where $e_0,e_1,\dots,e_v$ denotes the canonical basis of
$\ZZ\oplus \ZZ^{v}$.

We extend $\prec$ to a monomial order on $K[x,y]$. It is indeed
irrelevant which extension is chosen because the polynomials we
will consider are ``monomial'' in the $y$'s and so we denote
the extension by $\prec$ as well.

Then
$$
\ini_\prec( \Rees(I_1,\dots,I_v))=
\bigoplus_{a\in \NN^v} \ini_\prec( I_1^{a_1}\cdots I_v^{a_v}),
$$
and hence \eqref{1} holds if and only if
\begin{equation}
\label{2}
\ini_\prec( \Rees(I_1,\dots,I_v))=
\Rees(\ini_\prec(I_1),\dots,\ini_\prec(I_v)).
\end{equation}
Condition \eqref{2} can be expressed in terms of Sagbi basis.

For every $i$ let $F_{i1},\dots, F_{ic_i}$ a Gr\"obner basis of
$I_i$ with respect to $\prec$.  As a $K$-algebra, the Rees ring
$\Rees(I_1,\dots,I_v)$ is generated by two sets of polynomials:
\begin{itemize}
\item[(1)] $X=\{x_1,\dots, x_r\}$ and
\item[(2)] $\cF=\{ F_{ij}y_i  :   i=1,\dots, v \mbox{ and }
    j=1,\dots, c_i\}$.
\end{itemize}

Condition \eqref{2}  is equivalent to the statement
\begin{equation}
\label{3}
X\cup \cF  \mbox{ is a Sagbi basis with respect to }  \prec.
\end{equation}
To test whether condition  \eqref{3} holds we can use the Sagbi
variant of the Buchberger criterion \cite{CHV}. Set
$$
M_{ij}=\ini_\prec(F_{ij}).
$$
and consider two $A$-algebra maps from the polynomial ring
$$
P=A[ p_{ij}  :  i=1,\dots, v,\ j=1,\dots, c_i]
$$
to $A[y]$ defined as follows:
$$
\Phi(p_{ij})=M_{ij}y_i\qquad\text{and}\qquad
\Psi(p_{ij})=F_{ij}y_i.
$$
By construction
$$
\Image \Phi= \Rees(\ini_\prec(I_1),\dots,\ini_\prec(I_v))
\qquad\text{and}\qquad
\Image \Psi= \Rees(I_1,\dots,I_v).
$$

The kernel of $\Phi$ is a toric ideal, i.e., a prime ideal
generated by binomials since
$\Rees(\ini_\prec(I_1),\dots,\ini_\prec(I_v))$ is a $K$-algebra
generated by monomials. These binomials replace the S-pairs in
the Buchberger criterion for Gr\"obner bases. Roughly speaking,
the following criterion says that every such binomial relation
of the initial monomials can be ``lifted'' to a relation of the
elements of $G$ themselves.

\begin{lemma}[Sagbi version of the Buchberger criterion] \label{SagBuch}
Let $G$ be a set of binomials generating $\Ker \Phi$.  Suppose that for every $g\in G$ such that $\Psi(g)\neq 0$
one has:
$$
\Psi(g)= \sum \lambda_{a,b} X^a\cF^b
$$
where $\lambda_{a,b}\in K^*$, and $X^a{\cF}^b$ is a monomial in
the set $X\cup \cF$ such that $\ini_\prec(X^a{\cF}^b) \preceq
\ini_\prec(\Psi(g))$ for all $a,b$.

Then $X\cup \cF$ is a Sagbi basis.
\end{lemma}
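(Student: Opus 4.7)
The plan is a classical subduction argument in the style of \cite{RS,CHV}. To show that $X \cup \cF$ is a Sagbi basis, it suffices to prove that for every $f \in \Rees(I_1,\dots,I_v) = \Image\Psi$ the initial monomial $\ini_\prec(f)$ lies in the $K$-subalgebra generated by $X \cup \{M_{ij}y_i\}$. Since $\Psi$ and $\Phi$ are homogeneous for the $\ZZ \oplus \ZZ^v$-grading on $P$ and $A[y]$, we may fix $f$ multihomogeneous throughout.

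Choose a preimage $F = \sum_\alpha c_\alpha p^\alpha \in \Psi^{-1}(f)$ of the same multidegree as $f$ which minimises
\[
M(F) := \max\{\Phi(p^\alpha) : c_\alpha \neq 0\}
\]
with respect to $\prec$; this is legitimate because only finitely many monomials of $P$ occur in any fixed multidegree. Since $\ini_\prec(\Psi(p^\alpha)) = \Phi(p^\alpha)$, we always have $\ini_\prec(f) \preceq M(F)$, and the coefficient of $M(F)$ in $f$ equals $\sigma := \sum_{\alpha \in T} c_\alpha$, where $T = \{\alpha : \Phi(p^\alpha) = M(F)\}$. If $\sigma \neq 0$, then $\ini_\prec(f) = M(F)$, which is patently a product of elements of $X \cup \{M_{ij} y_i\}$, and we are done.

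If instead $\sigma = 0$, fix $\alpha_0 \in T$ and write $\sum_{T} c_\alpha p^\alpha = \sum_{\alpha \in T \setminus \{\alpha_0\}} c_\alpha (p^\alpha - p^{\alpha_0})$. Each binomial $p^\alpha - p^{\alpha_0}$ lies in $\Ker\Phi$, which is homogeneous for the fine semigroup grading $\deg(p^\beta) := \Phi(p^\beta)$ of the toric ideal; hence the $G$-expansion $p^\alpha - p^{\alpha_0} = \sum_j h_{\alpha,j} g_j$ may be chosen so that every monomial in each $h_{\alpha,j} g_j$ has $\Phi$-image exactly $M(F)$. For every $g_j$ with $\Psi(g_j) \neq 0$, the hypothesis provides $\Psi(g_j) = \sum_{a,b} \lambda_{a,b} \Psi(q_{a,b})$, where $q_{a,b} := X^a \prod p_{ij}^{b_{ij}} \in P$ and $\Phi(q_{a,b}) = \ini_\prec(X^a \cF^b) \preceq \ini_\prec(\Psi(g_j)) \prec \Phi(g_j^\pm)$ (the common $\Phi$-image of the two monomials of the binomial $g_j$); if $\Psi(g_j)=0$ we drop that term. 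Replacing each $g_j$ by $\sum_{a,b}\lambda_{a,b} q_{a,b}$ in the expression $\sum h_{\alpha,j}g_j$ yields a new preimage $F' \in \Psi^{-1}(f)$, and for every surviving monomial $p^\gamma q_{a,b}$ we compute
\[
\Phi(p^\gamma)\Phi(q_{a,b}) \prec \Phi(p^\gamma)\Phi(g_j) = M(F).
\]
Combined with the untouched monomials of $F$ (which already had $\Phi$-image $\prec M(F)$), this gives $M(F') \prec M(F)$, contradicting minimality. The descent terminates because $\prec$ is a well-order on monomials.

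The main obstacle is the homogeneity of the lift $p^\alpha - p^{\alpha_0} = \sum h_{\alpha,j} g_j$: one must ensure that no monomial produced by the lift has $\Phi$-image exceeding $M(F)$, for otherwise the substitution could not yield the strict drop. This is exactly what the fine-multigraded (i.e., toric) structure of $\Ker\Phi$ guarantees, and together with the strict inequality $\ini_\prec(\Psi(g_j)) \prec \Phi(g_j)$, which is automatic because $g_j \in \Ker\Phi$ is a binomial whose two top $\Phi$-images coincide and hence cancel under $\Psi$, it drives the descent and completes the proof.
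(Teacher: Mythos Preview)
The paper does not give its own proof of this lemma; it is stated as a known criterion, with the reader referred to \cite{CHV} and \cite{RS} for the Sagbi machinery. Your argument is the standard subduction/descent proof one finds there and is correct: choose a homogeneous preimage minimising the top $\Phi$-value, use the toric (fine multigraded) structure of $\Ker\Phi$ to expand the cancelling top part homogeneously in the generators $G$, then invoke the lifting hypothesis together with the automatic strict drop $\ini_\prec(\Psi(g_j))\prec\Phi(g_j^\pm)$ to produce a preimage with strictly smaller top $\Phi$-value, contradicting minimality. One tacit normalization you rely on is that the $F_{ij}$ are monic, so that for a binomial $g_j=p^\beta-p^{\beta'}\in\Ker\Phi$ the leading monomials of $\Psi(p^\beta)$ and $\Psi(p^{\beta'})$ really do cancel; this is harmless but worth saying explicitly.
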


\begin{remark}
\label{SagBuch1} If $g$ has total degree $1$ in the $p_{ij}$'s,
then  the condition required in Lemma \ref{SagBuch} is
automatically satisfied because $F_{i1},\dots, F_{ic_i}$  is a
Gr\"obner basis of the ideal $I_i$.  So we have only to worry
about the $g\in G$ of degree $>1$ in the $p_{ij}$'s.
\end{remark}

Assume now that each ideal $I_i$ is generated in a single
degree, say $d_i$. Then  $\Rees(I_1,\dots,I_v)$ can be given
the structure of a standard $\ZZ\oplus \ZZ^{v}$-graded
$K$-algebra by assigning the degree $e_j-d_je_0$  to $y_j$,
$j=1,\dots,v$, and $e_0$ to the variables $x_i$. On $P$ we
define the grading by  $\deg(x_i)=e_0$ and $\deg(p_{ij})=e_i$.
Then the maps $\Phi$ and $\Psi$ are $\ZZ\oplus \ZZ^{v}$-graded.

The following theorem relates a ring theoretic property of the
Rees algebra to the free resolutions of the ideals involved:

\begin{theorem}[Blum]
\label{Kos} If  each $I_i$ is generated in a single degree and
$\Rees(I_1,\dots,I_v)$ is a Koszul algebra (for example, it is
defined by a Gr\"obner basis of quadrics) then $I_1^{a_1}\cdots
I_v^{a_v}$ has a linear resolution for all $a_1,\dots,a_v\in
\NN$.
\end{theorem}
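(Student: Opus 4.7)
The plan is to derive the linearity of the minimal free $A$-resolution of $I_1^{a_1}\cdots I_v^{a_v}$ from a general principle: in a standard multigraded Koszul $K$-algebra, each multigraded strand is a linear module over the natural Veronese-type subalgebra. Our multi-Rees algebra $\Rees(I_1,\dots,I_v)$ will, after the right regrading, fit this framework, and the ideals $I_1^{a_1}\cdots I_v^{a_v}$ will appear as its multigraded strands over $A$.

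First I would regrade $\Rees(I_1,\dots,I_v) \subset A[y_1,\dots,y_v]$ exactly as in the paragraph preceding the theorem: put $\deg(x_i) = e_0 \in \NN^{v+1}$ and $\deg(y_j) = e_j - d_j e_0$. Because each $I_j$ is generated in the single degree $d_j$, the algebra $\Rees$ is then generated as a $K$-algebra by the $x_i$ (multidegree $e_0$) together with elements $Fy_j$ with $F$ a minimal generator of $I_j$ (multidegree $e_j$). Hence $\Rees$ becomes a standard $\NN^{v+1}$-graded $K$-algebra with $\Rees_0 = K$. For each $\alpha=(a_1,\dots,a_v) \in \NN^v$ the strand
$$
M_\alpha := \bigoplus_{c \in \NN}\Rees_{(c;\,\alpha)}
$$
is a graded $A$-module where $A = \Rees_{(*;\,0)} = K[x_1,\dots,x_r]$. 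Using the basis of elements $m\, y_1^{a_1}\cdots y_v^{a_v}$ with $m \in I_1^{a_1}\cdots I_v^{a_v}$ one gets a graded $A$-module isomorphism $M_\alpha \cong I_1^{a_1}\cdots I_v^{a_v}(-\sum_j d_j a_j)$, so $M_\alpha$ is generated in $A$-degree $0$.

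Next I would invoke Blum's general result: if $R$ is a standard multigraded Koszul $K$-algebra and $A$ is its Veronese-type subalgebra in the multidegrees supported on a coordinate subspace, then every multigraded strand $M_\alpha = \bigoplus_c R_{(c;\,\alpha)}$ admits a linear minimal free $A$-resolution. Applied to $R = \Rees$ this immediately gives that each $M_\alpha$, and therefore each $I_1^{a_1}\cdots I_v^{a_v}$, has a linear minimal free resolution over $A$. The mechanism behind Blum's theorem is that Koszulness of $R$ is equivalent to $\Tor^R_i(K,K)$ being concentrated in total degree $i$; selecting the multidegree-$\alpha$ piece of a linear $R$-free resolution of $K$ (or, equivalently, tracking the bar construction multigradedly) yields, after a shift, an exact complex of free $A$-modules whose differentials are of $A$-degree one.

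The main obstacle is the abstract strand-linearity statement itself: one has to check that the linearity of Tor over $R$ is inherited by the $A$-Tor of $M_\alpha$ with $K$ in the correct multidegrees. This is the technical content of Blum's paper; the combinatorial bookkeeping of the multigrading is tight precisely because the generators of $\Rees$ in the regraded picture occupy the coordinate basis $e_0,e_1,\dots,e_v$, and so each bar-complex component factors cleanly through the Veronese subalgebra $A$. Once Blum's lemma is quoted the theorem drops out without further work, since Steps 1 and 2 above verify its hypotheses in our situation.
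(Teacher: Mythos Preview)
Your proposal is correct and matches the paper's treatment exactly: the paper does not give an independent proof of this theorem but simply cites Blum \cite[Cor.~3.6]{B} for the case $v=1$ and remarks that ``the proof generalizes immediately to the multigraded setting.'' You have spelled out precisely what that generalization consists of---the standard $\NN^{v+1}$-regrading that makes $\Rees(I_1,\dots,I_v)$ standard multigraded, the identification of the $\alpha$-strand with the shifted product ideal $I_1^{a_1}\cdots I_v^{a_v}$, and the invocation of Blum's strand-linearity principle---so your write-up is in fact more detailed than what the paper provides.
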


This was proved by Blum \cite[Cor. 3.6]{B} for $v=1$, but the
proof generalizes immediately to the multigraded setting.

Now we return to the family of determinantal ideals  we are
interested in. Let $R=K[X]$ where $X=(x_{ij})$ is an $m\times n$-matrix of
indeterminates as introduced in Section \ref{SectStr}. For the
ideals of minors considered in this article, the equality
\eqref{1} is part of Theorem \ref{mainkrs}, but it will be
proved independently by the Sagbi approach.  Recall that, by definition,
for $t=1,\dots,m$ we denote by $J_t$ is the ideal generated by the $t$-minors of  the first $t$ rows of $X$.
For a nonincreasing
sequence $S=s_1,\dots,s_v$ the ideal $J_S=J_{s_1}\cdots J_{s_v}$ can be written as a
product of powers of the ideals $J_t$, and in this section it
is more convenient to use the latter representation.  To simplify notation we omit the row
indices in a superstandard tableau by setting
$$
[a_1\dots a_s]=[1\dots s\sep a_1\dots a_s].
$$
We know by Theorem \ref{mainkrs} that the minors   $[a_1\dots
a_s]$ are a Gr\"obner basis of $J_s$ with respect to a diagonal
monomial order. For the application of Lemma \ref{SagBuch} below we
set
\begin{itemize}
\item[(1)] $X=\{ x_{ij} : 1\leq i\leq m \mbox{ and } 1\leq j\leq n \}$,
\item[(2)] $\cF=\{ [a_1,\dots,a_s]y_s : 1\leq s\leq m \mbox{
    and } 1\leq a_1<\dots<a_s\leq n\}$.
\end{itemize}

Let
$$
{\cA}=\{ [a_1\dots a_s]: \ 1\le s\le m \mbox{ and } 1 \le a_1<\dots<a_s\leq n\}.
$$
The set $\cA$ inherits the partial order from the set of all
minors that has been introduced for the straightening law (see
Section \ref{SectStr}). The set of all minors is a distributive
lattice with respect to this order, and $\cA$ is a sublattice:
suppose that $r\le s$; to wit,
\begin{align*}
[a_1 \dots a_s]\wedge [b_1\dots b_r] &=[\min(a_1,b_1),
\min(a_2,b_2), \dots, \min(a_r,b_r), a_{r+1}, \dots, a_s],\\
[a_1 \dots a_s]\vee [b_1\dots b_r]&=[\max(a_1,b_1),
\max(a_2,b_2), \dots, \max(a_r,b_r)].
\end{align*}
For $a=[a_1\dots  a_s]\in \cA$ we set
$$
m_a=\ini([a_1\dots a_s])=\diag [a_1\dots a_s].
$$
For each $a\in\cA$ we introduce an indeterminate $p_{a}$ and
consider the $R$-algebra map
$$
\Phi: R[p_{a} :   a\in {\cA}  ]\to R[y_1,\dots, y_n],\qquad
\Phi(p_a)=m_ay_s.
$$

\begin{proposition}
\label{seemseasy}  $\Ker \Phi$ is generated by
\begin{itemize}
\item[(1)] the Hibi relations
    $$
    \underline{p_{a}p_{b}}-p_{a\wedge b}p_{a\vee b}
    $$
    with $a,b\in \cA$ incomparable, and
\item[(2)]  the relations of degree $1$ in the $p$'s,  more
    precisely, relations of the form
    $$
    \underline{x_{ij}p_{a}}-x_{ik}p_{b}
    $$
    with $a=[a_1\dots a_i, \dots, a_s]$, $a_{i-1}<j\le a_i$
    and $b=a\setminus \{a_i\} \cup \{j\}$.
\end{itemize}
These polynomials form a  Gr\"obner basis of $\Ker \Phi$ with
respect to every monomial order in which the underlined terms
are initial.
\end{proposition}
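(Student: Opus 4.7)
My plan is to verify that both listed families lie in $\Ker \Phi$, then to prove they form a Gröbner basis (from which generation follows) via the standard-monomial method. The verification is direct: for a Hibi binomial, at every row $i$ the multiset $\{a_i, b_i\}$ coincides with $\{\min(a_i, b_i), \max(a_i, b_i)\}$, giving $m_a m_b = m_{a \wedge b} m_{a \vee b}$, while $\{|a \wedge b|, |a \vee b|\} = \{|a|, |b|\}$ makes the $y$-factors match; for a degree-$1$ binomial $x_{ij} p_a - x_{i a_i} p_b$, the tableau $b$ differs from $a$ only in position $i$, so $|b| = |a|$ and $x_{ij} m_a = x_{i a_i} m_b$ as monomials.

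Next I would identify the standard monomials with respect to the monomial ideal $H \subseteq P$ generated by the underlined initial terms. Such a standard monomial has the form $\mu = X^\alpha \, p_{a_1} \cdots p_{a_k}$ where $a_1 \le \cdots \le a_k$ is a chain in $\cA$, and, for every variable $x_{ij}$ with $\alpha_{ij} > 0$ and every index $l$, one of $|a_l| < i$, $j \le a_{l, i-1}$, or $j > a_{l, i}$ holds. The inclusion $H \subseteq \ini_\prec(\Ker \Phi)$ is automatic from the first step. To obtain the reverse inclusion it suffices to show that the restriction of $\Phi$ to the $K$-span of standard monomials is injective; then, in each multidegree $d$, we have
$$
\dim_K (P/H)_d \;\le\; \dim_K (\Image \Phi)_d \;=\; \dim_K (P/\ini_\prec(\Ker \Phi))_d \;\le\; \dim_K (P/H)_d,
$$
forcing $H = \ini_\prec(\Ker \Phi)$, whence the listed binomials form a Gröbner basis of $\Ker \Phi$ and in particular generate it.

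The combinatorial heart is the injectivity assertion, which would occupy the remainder of the proof. Given two standard monomials $\mu, \mu'$ with $\Phi(\mu) = \Phi(\mu')$, the $y$-factors determine the multiset of sizes $\{|a_l|\}$. To recover the chain $(a_l)$ and the exponent $\alpha$ from the $R$-factor $X^\alpha \prod m_{a_l}$, I would induct on the row index $i$. In row $i$, the multiplicity of $x_{ij}$ equals $\alpha_{ij} + |\{l : a_{l, i} = j\}|$, and the standard-monomial condition forbids $\alpha_{ij} > 0$ whenever $j$ lies in the half-open interval $(a_{l, i-1}, a_{l, i}]$ for some $l$ with $|a_l| \ge i$; combined with the chain monotonicity $a_{1, i} \le a_{2, i} \le \cdots$ among those $l$ with $|a_l| \ge i$, this allows the row-$i$ entries of the chain and the row-$i$ part of $\alpha$ to be peeled off uniquely. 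The main obstacle is setting up this disentangling cleanly: the standard-monomial conditions are tuned precisely so that chain entries and ``extra'' variable factors do not collide, but the inductive step requires careful bookkeeping, distinguishing chain elements of different lengths in each row and handling columns in which several $a_{l, i}$ coincide with each other or lie adjacent to an $\alpha_{ij} > 0$.
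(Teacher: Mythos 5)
Your proposal follows essentially the same route as the paper: identify the monomials of $P$ outside the ideal $H$ generated by the underlined terms, show that $\Phi$ separates them, and conclude $H=\ini_\prec(\Ker\Phi)$ by comparing with the toric structure of $\Ker\Phi$; your multidegree dimension sandwich is just a repackaging of the paper's statement that each monomial in $\Image\Phi$ has exactly one preimage monomial satisfying the chain condition (i) and the divisibility condition (ii). Two points need attention, one of which is a genuine error as written. First, your description of the standard monomials is off by one: you forbid $\alpha_{ij}>0$ when $j$ lies in the half-open interval $(a_{l,i-1},a_{l,i}]$, but the relation $x_{ij}p_a-x_{ik}p_b$ with $j=a_i$ has $b=a$ and is the zero polynomial, so $x_{i,a_{l,i}}p_{a_l}$ is \emph{not} a generator of $H$ and must be counted as standard (e.g.\ $x_{11}p_{[1]}$ is standard, and indeed it is the unique monomial preimage of $x_{11}^2y_1$, hence cannot lie in $\ini_\prec(\Ker\Phi)$). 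With your half-open convention the inclusion $H\subseteq\ini_\prec(\Ker\Phi)$ fails and the second inequality in your sandwich breaks; the fix is simply to use the open interval $(a_{l,i-1},a_{l,i})$, matching the paper's condition $j\geq a_{k,i}$ or $j\leq a_{k,i-1}$. Second, the ``combinatorial heart'' you defer is where the paper is actually at its most efficient: rather than a row-by-row peeling with the bookkeeping you anticipate, one recovers the standard preimage greedily in the $p$-variables, taking $a_1$ to be the \emph{minimum} element $b\in\cA$ with $|b|=s_1$ and $m_b\mid w$, then $a_2$ the minimum $b$ with $|b|=s_2$ and $m_{a_1}m_b\mid w$, and so on; conditions (i) and (ii) force any standard preimage to agree with this greedy choice, which settles uniqueness in one stroke. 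You should also record, as the paper does, that a monomial order selecting the underlined terms exists (reverse lexicographic order refining the partial order on $\cA$), since otherwise the Gr\"obner basis claim is vacuous.
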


\begin{proof}
It is enough to prove that the given elements are a  Gr\"obner
basis of $\Ker \Phi$ .  The argument is quite standard (for
example, see for instance \cite[Chap.14]{Stu1} for similar
statements) and so we just sketch it. First note that a
monomial order selecting the underlined monomials is given by
taking  the reverse lexicographic order associated to a total
order  on the $p_{a}$'s that refines the partial order $\cA$.

To prove the assertion we choose an arbitrary  monomial in the
image of $\Phi$, say
$$
w y_{s_1}\cdots y_{s_e},\qquad s_1\geq \dots \geq s_e,\text{ $w$ a monomial in the
$x_{ij}$'s},
$$
 and check that the preimage $\Phi^{-1}(w
y_{s_1}\cdots y_{s_e})$ contains exactly one monomial of the
form
$$
up_{a_1}\cdots p_{a_e}
$$
with $|a_i|=s_i$ for $i=1,\dots, e$ and a monomial $u$ in the
$x_{ij}$'s such that
\begin{itemize}
\item[(i)] $a_1\leq a_2\leq \dots \leq a_e$  in the poset
    $\cA$;
\item[(ii)] for every $x_{ij}$ dividing $u$ and for every
    $k$, $1\leq k \leq e$, one has  either $j\geq a_{k,i}$
    or $j\leq a_{k,i-1}$ where $a_k=\{a_{k,1},\dots,
    a_{k,s_k}\}$ and, by convention, $a_{k,0}=0$.
 \end{itemize}
To check the claim one observes that $a_1$ is determined
uniquely as the minimum of the $b\in \cA$ such that $|b|=s_1$
and $m_b| w$, then $a_2$ is the minimum of the $b\in \cA$ such
that $|b|=s_2$ and $m_{a_1}m_b| w$ and so on.
\end{proof}

\begin{remark}
For every finite lattice $L$ one may consider the ring
$$K[L]=K[x : x\in L]/(  xy- (x\wedge y)(x\vee b) : x,y\in L).$$
Hibi proved in \cite{H} that $K[L]$ is a domain if and only if
$L$ is distributive and in that case $K[L]$ turns out to be
(isomorphic to) a normal semigroup ring. When $L$ is a
distributive lattice  $K[L]$ is called the Hibi ring of $L$.
That is why the elements  $p_{a}p_{b}-p_{a\wedge b}p_{a\vee b}$
in \ref{seemseasy} are called Hibi  relations.  In our setting
the Hibi ring associated to $\cA$ coincides with the
multi-graded coordinate ring of flag variety associated to the
sequence $1,2,\dots,m$ and also with the special fiber
$\Rees/(x_{ij})\Rees$  of the multi-Rees algebra
$\Rees(J_1,\dots,J_m)$.
\end{remark}

\begin{example}
For $m=n=4$ the generators of $\Ker \Phi$ are
$$\begin{array}{ccc}
        	x_{1,3}p_{4}- x_{1,4}p_{3} &
	x_{1,2}p_{4}- x_{1,4}p_{2} &
	x_{1,1}p_{4}- x_{1,4}p_{1} \\
	x_{1,2}p_{3}- x_{1,3}p_{2} &
	x_{1,1}p_{3}- x_{1,3}p_{1} &
	x_{1,1}p_{2}- x_{1,2}p_{1} \\
	x_{2,3}p_{24}- x_{2,4}p_{23} &
	x_{2,3}p_{14}- x_{2,4}p_{13}  &
	x_{2,2}p_{14}- x_{2,4}p_{12}  \\
	x_{1,2}p_{34}- x_{1,3}p_{24} &
	x_{1,1}p_{34}- x_{1,3}p_{14} &
	x_{1,1}p_{24}- x_{1,2}p_{14} \\
	x_{2,2}p_{13}- x_{2,3}p_{12} &
	x_{1,1}p_{23}- x_{1,2}p_{13} &
	x_{3,3}p_{124}- x_{3,4}p_{123} \\
	x_{2,2}p_{134}- x_{2,3}p_{124} &
	x_{1,1}p_{234}- x_{1,2}p_{134} &
	p_{34}p_{2}- p_{24}p_{3}  \\
	p_{34}p_{1}- p_{14}p_{3} &
	p_{24}p_{1}- p_{14}p_{2}  &
	p_{23}p_{1}- p_{13}p_{2}  \\
	p_{14}p_{23}- p_{13}p_{24} &
	p_{234}p_{1}- p_{134}p_{2} &
	p_{234}p_{14}- p_{134}p_{24} \\
	p_{234}p_{13}- p_{134}p_{23} &
	p_{234}p_{12}- p_{124}p_{23} &
	p_{134}p_{12}- p_{124}p_{13} \\
 \end{array}
$$
\end{example}

Now we have collected all arguments for our main result.

\begin{theorem}
\label{mainsagbi}\leavevmode
\begin{enumerate}
\item  The set $X\cup\cF$ is a Sagbi basis of the
    multi-Rees algebra $\Rees(J_1,\dots,J_m)$.
\item For all $a_1,\dots, a_m\in \NN$ we have
    $$
    \ini_\prec(J_1^{a_1}\cdots J_m^{a_m} )=
    \ini_\prec(J_1)^{a_1} \cdots \ini_\prec(J_m)^{a_m},
    $$
    and $J_1^{a_1}\cdots J_m^{a_m}$ has a linear
    resolution.
\item   $\Rees(J_1,\dots,J_m)$ is a normal and Koszul
    domain.
\end{enumerate}
\end{theorem}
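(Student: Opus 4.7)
The plan is to prove (1) by applying the Sagbi--Buchberger criterion (Lemma~\ref{SagBuch}) to the binomial Gr\"obner basis of $\Ker\Phi$ supplied by Proposition~\ref{seemseasy}. Parts (2) and (3) will then follow by transferring Koszulness and normality from the initial algebra $\Image\Phi$ back to $\Rees(J_1,\dots,J_m)$ along the resulting Sagbi deformation.

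For (1), the linear generators $x_{ij}p_a-x_{ik}p_b$ of $\Ker\Phi$ are handled for free by Remark~\ref{SagBuch1}, so the real work lies with the Hibi relations $p_ap_b-p_{a\wedge b}p_{a\vee b}$ for incomparable $a=[a_1\dots a_s]$, $b=[b_1\dots b_t]\in\cA$ (say $s\ge t$). I would straighten the product $[a][b]$ via Theorem~\ref{straight}. The decisive observation is a content argument on the left tableau: the row-multiplicities are $2$ on $1,\dots,t$ and $1$ on $t{+}1,\dots,s$, and any standard term $\epsilon\eta$ in the straightening must share this content while obeying $|\epsilon|\ge|\eta|$; these two constraints force $|\epsilon|=s$, $|\eta|=t$ and both minors to be row-superstandard, so $\epsilon,\eta\in\cA$. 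A short direct check then shows that for such $\epsilon=[f_1\dots f_s]$, $\eta=[g_1\dots g_t]$ with $\epsilon\le\eta$, the equality $\diag(\epsilon\eta)=m_am_b$ is equivalent to $f_i=\min(a_i,b_i)$ and $g_i=\max(a_i,b_i)$. Hence $[a\wedge b][a\vee b]$ is the \emph{unique} term in the straightening whose diagonal equals $m_am_b$; matching the leading monomial on both sides pins its coefficient at $1$ and forces $\ini_\prec(\epsilon_\ell\eta_\ell)\prec m_am_b$ for every remaining term. Multiplying by $y_sy_t$ now displays
\[
\Psi(p_ap_b-p_{a\wedge b}p_{a\vee b})=\sum_\ell c_\ell(\epsilon_\ell y_s)(\eta_\ell y_t),\qquad c_\ell\in K^*,
\]
as a combination of monomials in $X\cup\cF$ whose initial terms all lie $\preceq\ini_\prec(\Psi(g))$, which is exactly what Lemma~\ref{SagBuch} demands.

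Granted (1), the equality $\ini_\prec(\Rees(J_1,\dots,J_m))=\Image\Phi$ is condition \eqref{2}, which unpacks as the initial-ideal multiplicativity claimed in (2). For the linear resolution part of (2), I invoke Theorem~\ref{Kos}: Koszulness of $\Rees(J_1,\dots,J_m)$ is immediate because Proposition~\ref{seemseasy} supplies a quadratic Gr\"obner basis of $\Ker\Phi$, hence $\Image\Phi$ is Koszul, and Koszulness lifts to the Sagbi-generic fiber by the standard flatness argument of \cite{CHV}. The same flat deformation transfers normality of $\Image\Phi$---which is the Hibi-type toric algebra attached to the distributive lattice $\cA$ (as noted in the remark after Proposition~\ref{seemseasy}), hence a normal semigroup ring---to normality of $\Rees(J_1,\dots,J_m)$; the domain property is evident from $\Rees\subset R[y_1,\dots,y_m]$. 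This completes (3).

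The principal obstacle is the Hibi-relation step of (1), where two compatibilities must be secured at once: the content analysis ensuring every straightening term lies in $\cA\cdot\cA$, and the diagonal uniqueness ensuring $[a\wedge b][a\vee b]$ is the sole term of maximal $\ini_\prec$, delivering the strict bound demanded by Lemma~\ref{SagBuch}. With these two pieces of information in hand, the rest---the equivalence \eqref{2}, the Koszul and normality lifts, and the linear resolution via Theorem~\ref{Kos}---is essentially formal.
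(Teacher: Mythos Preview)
Your argument for (1) and for the Koszul and linear-resolution parts of (2) and (3) is correct and matches the paper's proof almost exactly; in fact your content argument on the left tableau makes explicit what the paper asserts in one sentence (``only standard monomials with the same shape''), and your diagonal-uniqueness check is precisely the paper's observation that a superstandard bitableau is determined by its diagonal monomial.

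There is, however, a genuine gap in your normality argument for (3). You write that $\Image\Phi$ ``is the Hibi-type toric algebra attached to the distributive lattice $\cA$ (as noted in the remark after Proposition~\ref{seemseasy}), hence a normal semigroup ring.'' But that remark says something different: the Hibi ring $K[\cA]$ is the \emph{special fiber} $\Rees/(x_{ij})\Rees$, not the initial algebra $\Image\Phi=\Rees(\ini_\prec(J_1),\dots,\ini_\prec(J_m))$ itself. The algebra $\Image\Phi$ is strictly larger---it contains all the variables $x_{ij}$ and all the monomials $m_ay_s$---and Hibi's normality theorem for $K[L]$ with $L$ distributive does not apply to it directly. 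So the ``hence'' is unjustified as written.

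The fix is short, and it is what the paper does: $\Image\Phi$ is a toric (monomial) $K$-algebra, and by Proposition~\ref{seemseasy} its defining ideal $\Ker\Phi$ has a Gr\"obner basis whose initial monomials $p_ap_b$ and $x_{ij}p_a$ are square-free. By Sturmfels' criterion \cite[Prop.~13.15]{Stu1}, a toric ring whose toric ideal has a square-free initial ideal is normal. Normality then lifts along the Sagbi deformation exactly as you say (the paper cites \cite[3.12]{BC2}). With this correction your proof is complete and coincides with the paper's.
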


\begin{proof}
(1) follows  from Proposition \ref{seemseasy}, Lemma
\ref{SagBuch} and Remark \ref{SagBuch1}, provided we can
``lift'' the Hibi relations. For incomparable $a,b\in \cA$
consider the non-standard product $[a][b]$. In its standard
representation we have only standard monomials with the same
shape. A standard monomial with super-standard row tableau can
be reconstructed from its initial (diagonal) term and the only
standard monomial with super-standard row with initial term
equal to that of  $[a][b]$ is $[a\wedge b][a\vee b]$. It
follows that $[a\wedge b][a\vee b]$ appears in the standard
representation of $[a][b]$ and all the other standard monomials
have leading term strictly smaller than that of $[a][b]$. This
shows that the Hibi relations lifts.

(2) The equation $\ini_\prec(J_1^{a_1}\cdots J_m^{a_m} )=
\ini_\prec(J_1)^{a_1} \cdots \ini_\prec(J_m)^{a_m}$ has already
been stated in Theorem \ref{mainkrs}, but it follows again from
the equivalence of \eqref{1} and \eqref{3}.

Note that Theorem \ref{mainkrs} conversely implies the
liftability of the Hibi relations since it shows that
$X\cup\cF$ is a Sagbi basis.

The algebra $\Rees(\ini_\prec(J_1),\dots,\ini_\prec(J_m))$ is
Koszul since it is defined by a Gr\"obner basis of quadrics as
stated in Proposition \ref{seemseasy}. But
$\ini_\prec(\Rees(J_1,\dots,J_m))=
\Rees(\ini_\prec(J_1),\dots,\ini_\prec(J_m))$, and the
Koszulness of $\Rees(J_1,\dots,J_m)$ is a consequence of the
preservation of Koszulness under Sagbi deformation
\cite[3.14]{BC2}. This proves part of (3) and Theorem \ref{Kos}
implies that the ideals $J_1^{a_1}\cdots J_m^{a_m}$ have a
linear resolution.

(3) Only the normality of the multi-Rees algebra is still open.
 To this end one can apply the preservation
of normality under Sagbi deformation \cite[3.12]{BC2} and apply
\cite[Prop.13.15]{Stu1} which implies that
$\ini_\prec(\Rees(J_1,\dots,J_m))$ is normal since its defining
ideal has a square-free initial ideal.
\end{proof}

\section{Equivariant $R$-modules}\label{SectK}
In this section we make the assumption that $m \geq n$. This
will simplify the conclusion of main result of the section,
which has a less pleasing analogue when $m < n$.

Let $T^m \subset \GL_m(K)$ denote the diagonal torus, and set
$G := T^m \times \GL_n(K)$. Then $G$ acts on $R$ as in
Section~\ref{Intro}. In this section we consider the
Grothendieck group of finitely generated $G$-equivariant
$R$-modules with a rational $G$-action, denoted $K^0_G(R)$.

Since $R$ is a polynomial ring, the group $K^0_G(R)$ can be
identified with the representation ring of $G$. Hence
$K^0_G(R)$ is generated by the free equivariant modules $R
\otimes V$, as $V$ ranges over all finite dimensional rational
$G$ modules. The group $K^0_G(R)$ inherits a product from the
tensor product of $G$-modules. The product of the classes of
two general equivariant $R$-modules can be expressed in terms
of their Tor-modules, a fact we will not need here.

Using the multigrading of Section~\ref{SectStr}, an equivariant
$R$-module $M$ is at once seen to be a multigraded module. We
write its Hilbert series as
\[
\hilb(M) = \sum_{\mathbf{a} \oplus \mathbf{b} \in \ZZ^m \oplus \ZZ^n}
\dim_K(M_{\mathbf{a} \oplus \mathbf{b}})u^\mathbf{a} v^{\mathbf{b}} \in
\ZZ[[u_1^{\pm 1},\dots,u_m^{\pm 1},v_1^{\pm 1},\dots,v_n^{\pm 1}]]^{\mathfrak{S}_n}.
\]
Here the group $\mathfrak{S}_n$ is permuting the $v$ variables,
and the $\GL_n(K)$-invariance of $M$ forces $\hilb(M)$ to be
invariant under this action. The Hilbert series $\hilb(M)$ can
alternately be described as the character of the $G$-module
$M$. There is a Laurent polynomial $K(M;u,v)$ such that
\[
\hilb(M) = \frac{K(M;u,v)}{\prod_{i=1}^m\prod_{j=1}^n (1-u_iv_j)}
\]
and hence we identify the class of a module $M$ in $K^0_G(R)$
with $K(M;u,v)$ \cite[Th.~8.20]{MS}. This makes the
identification of $K^0_G(R)$ with the representation ring of
$G$ explicit:
\[
K^0_G(R) = \ZZ[ u_1^{\pm 1},\dots,u_m^{\pm 1},v_1^{\pm 1},\dots,v_n^{\pm 1}]^{\mathfrak{S}_n},
\quad M \mapsto K(M;u,v).
\]

The superstandard bitableau of shape $S$ span a representation of $G$
\cite[Thm.~11.5(a)]{BV}. It follows that the ideals $J_S$ are
$G$-invariant, and hence the quotient ring $R/J_S$ defines an element
of $K^0_G(R)$. This stands in contrast to an ideal generated by
standard bitableaux with a fixed left tableau, which does not
necessarily a $G$-invariant ideal (see \cite[Rmk.~11.12]{BV}).

\begin{proposition}\label{propKbasis}
The classes of the modules $R/J_S$, as $S$ ranges over shapes
$S$ with part sizes at most $n-1$, freely generate $K^0_G(R)$
as a module over $\ZZ[u_1^{\pm 1},\dots,u_m^{\pm 1},(v_1 \cdots
v_n)^{\pm 1}]$.
\end{proposition}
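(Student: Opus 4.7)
The plan is to realize $\{[R/J_S]\}$ as the image of the Schur basis of $K^0_G(R)$ under a triangular change of basis with unit-diagonal entries, reading the triangularity off the linear resolution of $J_S$ established in Theorem~\ref{mainsagbi}.

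First, using $K^0_G(R) = \ZZ[u^{\pm 1}, v^{\pm 1}]^{\mathfrak{S}_n}$ as set up in this section and the classical identity $s_{\mu + (1^n)}(v) = (v_1 \cdots v_n)\, s_\mu(v)$, this ring is free over $\Lambda := \ZZ[u_1^{\pm 1},\dots,u_m^{\pm 1},(v_1 \cdots v_n)^{\pm 1}]$ with basis $\{s_\lambda(v) : \ell(\lambda) \leq n-1\}$. The conjugation map $S \mapsto S'$ furnishes a bijection between shapes $S$ with parts $\leq n-1$ and partitions indexing this Schur basis, providing the row/column pairing for the transition matrix to be studied.

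Next, I would pin down the leading Schur term of $[R/J_S]$. By Theorem~\ref{superbasis}, the minimal generators of $J_S$ are the superstandard bitableaux of shape $S$; they share the common $T^m$-weight $u^{S'} := \prod_i u_i^{s'_i}$, and, after transposing right tableaux, are parametrized by semistandard Young tableaux of shape $S'$, so their $K$-span is $G$-equivariantly isomorphic to $L_{S'}(V) \otimes K_{u^{S'}}$, of character $u^{S'} s_{S'}(v)$. The linear minimal free resolution $F_\bullet$ from Theorem~\ref{mainsagbi}(2) then yields
\[
[R/J_S] \;=\; 1 \;-\; u^{S'} s_{S'}(v) \;+\; \sum_{i \geq 2} (-1)^i \chi(W_i) \qquad \text{in } K^0_G(R),
\]
where $F_i = R \otimes W_i$ and each $\chi(W_i)$ has total $v$-degree $|S| + i - 1$.

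To invert the transition matrix, I would expand each $\chi(W_i)$ in Schur polynomials and reduce those of length $n$ via $s_\mu = (v_1 \cdots v_n)^{\mu_n}\, s_{\mu - \mu_n(1^n)}$. A contribution from $W_i$ with $i \geq 2$ to the coefficient of $s_{S'}$ demands a Schur component indexed by $S' + k(1^n)$ with $k \geq 1$, which forces $i \geq n+1$; bounding $\mathrm{pd}(R/J_S) \leq n$ (which I expect to derive from the Cohen--Macaulay nature of $R/J_S$, propagated from $R/\ini_\prec(J_S)$ via the square-free-initial Sagbi deformation in Theorem~\ref{mainsagbi}) rules these out, pinning the diagonal entry to the unit $-u^{S'} \in \Lambda$. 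An analogous analysis of off-diagonal entries, combined with an induction on $|S|$, then gives an invertible transition matrix and hence a $\Lambda$-basis of $K^0_G(R)$.

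The hardest step I anticipate is making the triangularization rigorous: the minimal-generator analysis yields a clean unit diagonal, but higher syzygies can produce off-diagonal contributions across various Schur degrees, so the projective-dimension bound together with a carefully chosen well-ordering on shapes are the most delicate ingredients of the argument.
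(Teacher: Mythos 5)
Your first half tracks the paper exactly: the reduction to showing that the polynomials $K(J_S;u,v)$ generate over $\ZZ[u^{\pm1},(v_1\cdots v_n)^{\pm1}]$, the identification of the minimal generators of $J_S$ as an irreducible $G$-module with character $u^{S'}\sigma_S(v)$, and the resulting linear independence are all steps the paper takes. The divergence --- and the gap --- lies in how you control the contribution of the higher terms. Your computation of the diagonal entry hinges on the bound $\operatorname{pd}(R/J_S)\le n$, which you propose to extract from Cohen--Macaulayness of $R/J_S$. Both claims are false in general. First, by Corollary~\ref{primary} the minimal primes of $J_S$ are the $J_{t_i}$, of heights $n-t_i+1$; so unless all parts of $S$ are equal, $R/J_S$ is not even equidimensional, let alone Cohen--Macaulay. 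Second, the projective dimension bound fails even for rectangular shapes: the special fiber of $\Rees(J_s)$ is the Pl\"ucker cone over the Grassmannian of $s$-planes in $K^n$, of dimension $s(n-s)+1$, so Burch's inequality gives $\operatorname{pd}\bigl(R/J_s^k\bigr)\ge s(n-s)+1$ for $k\gg 0$, and $s(n-s)+1>n$ whenever $2\le s\le n-2$ (e.g.\ $n=4$, $s=2$ gives $\operatorname{pd}\ge 5$). Hence for such $S$ the syzygy modules $W_i$ with $i\ge n+1$ that your argument requires to vanish need not vanish, and your identification of the diagonal entry with $-u^{S'}$ is unjustified; the off-diagonal/well-ordering analysis, which you yourself flag as open, inherits the same problem.

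The paper avoids the minimal resolution entirely. It uses $K(J_S;u,v)=K(\ini_\prec(J_S);u,v)$ and the (highly non-minimal) Taylor resolution of the monomial ideal $\ini_\prec(J_S)$: the least common multiple of the diagonal initial terms of the superstandard bitableaux of shape $S$ meets the first column of $X$ only in $x_{11}^{s_1'}$, so every twist in the Taylor resolution has $v_1$-exponent at most $s_1'$, whence every Schur polynomial occurring in $K(J_S;u,v)$ has length at most $s_1'$ and shape contained in an $s_1'\times n$ box. This finiteness, together with the explicitly computed principal case $S=(n,\dots,n)$ where $K(J_S;u,v)=(u_1\cdots u_n)^{s_1'}(v_1\cdots v_n)^{s_1'}$, yields generation by induction. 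If you want to rescue your Euler-characteristic strategy, you must replace the homological-degree bound by a bound on the lengths of the Schur constituents appearing in the alternating sum --- which is precisely what the paper's lcm computation supplies.
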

Multiplication by $(v_1 \cdots v_n)^{ \pm 1}$ corresponds to
tensoring with the determinantal character of $\GL_n(K)$ or its
dual, and multiplication by a $u$ variable corresponds to
tensoring with a character of $T$.
\begin{proof}
  It is sufficient to show that the polynomials $K(J_S;u,v)$, as $S$
  ranges over all shapes, generate
  \[
  \ZZ[u_1^{\pm 1},\dots,u_m^{\pm 1},v_1^{\pm 1},\dots,v_n^{\pm
    1}]^{\mathfrak{S}_n}
  \]
  as a module over $\ZZ[u_1^{\pm 1},\dots,u_m^{\pm 1},(v_1 \cdots
  v_n)^{-1}]$. This is because all rational representations of $\GL_n(K)$
  are obtained by tensoring polynomial representations with a power of
  the determinantal representation, and $K(R/J_S;u,v) = 1-
  K(J_S;u,v)$.

  For any shape $S$ whose part sizes are at most $n$, let
  $\sigma_S(v)$ denote the \emph{Schur polynomial} in variables
  $v_1,\dots,v_n$. That is, $\sigma_S(v)$ will be the generating
  function in $v$ for the content of tableaux of shape $S$ with
  strictly increasing rows, weakly increasing columns and entries in
  $\{1,\dots,n\}$.

  The ideal $J_S$ is generated by an irreducible representation of $G$
  whose character is $u_1^{s'_1} \cdots u_m^{s'_\ell}\cdot
  \sigma_S(v)$, were $S'=s_1',\dots,s_\ell'$ denotes the transpose of
  $S$. This is the shape whose $j$th part is $s_j' = \#\{s_i : i \geq
  j\}$. It follows that
  \[
  \hilb(J_S) =  u_1^{s'_1} \cdots u_m^{s'_\ell}\cdot
  \sigma_S(v) + \cdots
  \]
  where the ellipsis denotes a $\ZZ[v]$-linear combination of Schur
  polynomials of degree larger than $\sum_i s_i$. Multiplying by
  $\prod_{i=1}^m \prod_{j=1}^n (1-u_i v_j)$, this proves that
  $K(J_S;u,v)$ takes the same form. We conclude the linear
  independence of the classes, since the Schur polynomials are
  linearly independent.

  To finish the proof, we must show that every Schur polynomial can be
  written as a finite $\ZZ[u^{\pm 1}]$-linear combination of these
  classes. The difficulty with this lays in demanding the finiteness
  of the expression. We will show, first, that the
  Schur polynomials appearing in $K(J_S;u,v)$ never get too long, and
  second, when $S=n,\dots,n$ ($\ell$-factors) that $K(J_S;u;v) =
  (u_1\cdots u_n)^\ell \sigma_S(v)$.

  We will use the fact that passing to an initial ideal does not alter
  $K$-classes: $K(J_S;u,v) = K(\ini(J_S);u,v)$
  \cite[Prop.~8.28]{MS}. Although $\ini(J_S)$ is no longer a
  $G$-equivariant ideal, we can compute its $K$-polynomial in the
  Grothendieck group of multigraded modules. To understand
  $K(\ini(J_S);u,v)$ we resolve the quotient $R/\ini(J_S)$ by its
  highly non-minimal Taylor resolution \cite[Ch.~6]{MS}. Write
  $\ini(J_S) = \langle m_1,\dots, m_r \rangle$, where the $m_i$ are
  the leading terms of the superstandard bitableaux of shape $S$ in
  the diagonal term order. Given a subset $I \subseteq\{1,\dots,r\}$,
  set $m_I = \operatorname{lcm} \{m_i : i \in I\}$. If the degree of
  $m_I$ is $(\mathbf{a}_I,\mathbf{b}_I) \in \NN^m \oplus \NN^n$, then
  the $i$th piece of the Taylor resolution of $\ini(J_S)$ is
  $\bigoplus_{I: |I|=i} R(-\mathbf{a}_I,-\mathbf{b}_I)$. It is a fact
  that this can be endowed with a differential yielding a resolution
  of $R/\ini(J_S)$.

  We claim that all Schur polynomials that appear with a non-zero
  coefficient in $K(J_S;u,v)$ have length at most $s_1'$. Suppose that
  this were not true. Writing $K(J_S;u,v)$ in the standard basis of
  monomials of $\ZZ[u,v]$ this implies that the variable $v_1$ appears
  with exponent greater than $s_1'$. However, appealing to the fact
  that the Taylor resolution can be used to compute $K(R/\ini(J_S);u,v)$, this
  means that there is some monomial $m_I$ whose associated degree
  $(\mathbf{a}_I,\mathbf{b}_I)$ has $(\mathbf{b}_I)_1 >
  s_1'$. However, the least common multiple of all the $m_i$ is of the
  form $x_{11}^{s_1'}\cdot$(a monomial in $x_{ij}$ with $j \neq 1$),
  which is a contradiction.

  It follows that $K(J_S;u,v)$ can be written as a finite
  $\ZZ[u]$-linear combination of Schur polynomials whose shape is
  contained in a $s_1' \times n$ box. Suppose that $S = n,\dots,n$. Then
  the ideal $J_S$ is principal, generated by a power of a maximal
  minor of $X$. That $K(J_S;u,v) = (u_1 \cdots u_n)^{s_1'} (v_1 \cdots
  v_n)^{s_1'}$ is immediate. By induction, we may write $\sigma_S(v)$
  as a linear $\ZZ[u^{\pm 1}]$-linear combination of the classes of
  ideals generated by superstandard tableaux.
\end{proof}

\begin{example}
  Take $n=m=3$ and $S=2,1$. The least common multiple of the initial
  monomials of the superstandard bitableaux of shape $S$ is
  $x_{11}^2x_{12}^2x_{22}x_{13}x_{23}$. Using Macaulay2, we have,
  \begin{multline*}
  K(J_S;u,v) = \sigma_{2,1}(v) u_1^2 u_2 -
  \sigma_{2,2}(v) u_1^3 u_2 -
  \sigma_{3,1}(v) (u_1^3 u_2 + u_1^2 u_2^2) \\+
  \sigma_{3,2}(v) (u_1^4 u_2 + u_1^3u_2^2) -
  \sigma_{3,3}(v) u_1^4 u_2^2.
  \end{multline*}
  Observe that each shape appearing has at most two parts.
\end{example}


\begin{thebibliography}{99.}

\bibitem{B} S.~Blum, {\em Subalgebras of bigraded Koszul
    algebras,} J. Algebra { \textbf {242}} (2001), no. 2,
    795-809.


\bibitem{BC1} W. Bruns and A. Conca. {\em KRS and determinantal
    rings}. In: J. Herzog, G. Restuccia (Eds.), Geometric and
    combinatorial aspects of commutative algebra. Lecture Notes
    in Pure and Applied Mathematics {\bf 217}. M. Dekker 2001,
    pp.\ 67--87.

\bibitem{BC2} W. Bruns and A. Conca. {\em Gröbner bases and
    determinantal ideals.} In: J.
    Herzog und V.Vuletescu (Eds.), Commutative Algebra,
    Singularities and Computer Algebra. Kluwer 2003, pp. 9-66

\bibitem{BV} W.~Bruns and U.~Vetter. {\em Determinantal rings}.
    Lect.~Notes Math. {\bf 1327}, Springer 1988.

\bibitem{Cocoa} CoCoA Team,
 \emph{ CoCoA: a system for doing
  Computations in Commutative Algebra},
  Available at http://cocoa.dima.unige.it

 \bibitem{CHV}
 A.~Conca, J.~Herzog, G.~Valla,
{ \em  Sagbi bases with applications to blow-up algebras,}
 J. Reine Angew. Math. { \textbf {474}} (1996), 113-138.

\bibitem{Co4}  A.~Conca,
{ \em Gr\"obner bases of powers of ideals of maximal minors,}
 Journal of Pure and Applied Algebra  { \textbf {121 }} (1997), 223-231.


\bibitem{DEP} C.~De Concini, D.~Eisenbud and C.~Procesi. {\em
    Young diagrams and determinantal varieties}. Invent. math.
    {\bf 56} (1980), 129--165.

\bibitem{F} W. Fulton. {\em Young tableaux}. Cambridge
    University Press 1997.

\bibitem{M2} D.~Grayson, M.~Stillman.
\newblock Macaulay2, a software system for research in algebraic geometry.
\newblock Available at {http://www.math.uiuc.edu/Macaulay2/}

\bibitem{H} T.~Hibi, {\em Distributive lattice, affine semigroup
rings and algebras with straightening laws}, Commutative algebra
and combinatorics, Advanced studies in pure mathematics,
Vol.~11,  North-Holland, Amsterdam, 1987, pp.~93-109.


\bibitem{K} D.E.~Knuth. {\em Permutations, matrices, and
    generalized Young tableaux}. Pacific J. Math. {\bf 34}
    (1970), 709--727.

\bibitem{MS} E.~Miller, B.~Sturmfels. {\em Combinatorial
    commutative algebra}.
\newblock Springer-Verlag, 2005.

\bibitem{RS} L.~Robbiano, M.~Sweedler. {\em Subalgebra bases.}
In: W.~Bruns, A.~Simis (Eds.), Commutative Algebra, Lect.~Notes
Math. {\bf 1430}, Springer 1990, pp. 61--87.

\bibitem{Sch} C.~Schensted, {\em Longest increasing and
    decreasing subsequences}. Can.~J.~Math. {\bf 13} (1961),
    179--191.

\bibitem{Sta} R.P. Stanley, {\em Enumerative Combinatorics},
    Vol 2. Cambridge University Press 1999.

\bibitem{Stu} B.~Sturmfels, {\em Gr\"obner bases and Stanley
    decompositions of determinantal rings}. Math.~Z. {\bf 205}
    (1990), 137--144.


\bibitem{Stu1} B.~Sturmfels, {\em Gr\"obner bases and convex
    polytopes}. University Lecture Series 8, 1996.

\bibitem{StS} B.~Sturmfels, S.~Sullivant, {\em Combinatorial secant varieties}. Pure Appl. Math. Q. 2 (2006) {\bf 3}(1), 867--891.

\bibitem{Sul} S.~Sullivant, {\em Combinatorial symbolic
    powers}. J.~Algebra 319 {\bf 1} (2008), 115--142.

\end{thebibliography}
\end{document}